\documentclass[11pt,reqno]{amsart}

\usepackage{amssymb, amsmath, amsthm}
\usepackage{hyperref}
\usepackage[alphabetic,lite]{amsrefs}
\usepackage{verbatim}
\usepackage{amscd}   
\usepackage[all]{xy} 
\usepackage{youngtab} 
\usepackage{young} 
\usepackage{ytableau}
\usepackage{tikz}
\usetikzlibrary{arrows}
\usepackage{ mathrsfs }
\usepackage{cases}
\usepackage{array}
\usepackage{tabu}
\usepackage{calligra,mathrsfs}
\usepackage{dsfont}
\usepackage{upgreek}

\textheight8.2in
\setlength{\textwidth}{7.2in}
\setlength{\oddsidemargin}{-0.2in}
\setlength{\evensidemargin}{-0.2in}

\newcommand{\defi}[1]{{\upshape\sffamily #1}}
\DeclareMathOperator{\ShHom}{\mathscr{H}\text{\kern -3pt {\calligra\large om}}\,}

\newcommand{\D}{\mathcal{D}}



\newcommand{\pth}{\operatorname{path}}

\newcommand{\cpc}{\operatorname{cap}}

\newcommand{\Eu}{\operatorname{Eu}}

\newcommand{\opmod}{\operatorname{mod}}
\newcommand{\op}{\operatorname}

\newcommand{\bb}[1]{\mathbb{#1}}

\newcommand{\ol}[1]{\overline{#1}}

\newcommand{\ul}[1]{\underline{#1}}

\newcommand{\redit}[1]{\textcolor{red}{#1}}

\newtheorem{theorem}{Theorem}[section]
\newtheorem*{theorem*}{Theorem}
\newtheorem*{problem*}{Problem}
\newtheorem{lemma}[theorem]{Lemma}

\newtheorem*{corollary*}{Corollary}

\newtheorem*{main-thm*}{Main Theorem}
\newtheorem*{Euler-obstructions*}{Theorem on Euler obstructions}
\newtheorem*{char-cycles*}{Theorem on characteristic cycles of simple $\D_X$-modules}
\newtheorem*{local-Euler*}{Theorem on intersection cohomology local Euler characteristics}
\newtheorem*{deRham*}{Theorem on de Rham cohomology}

\theoremstyle{definition}
\newtheorem{definition}[theorem]{Definition}
\newtheorem*{definition*}{Definition}
\newtheorem{example}[theorem]{Example}

\theoremstyle{remark}
\newtheorem{remark}[theorem]{Remark}
\newtheorem*{remark*}{Remark}

\numberwithin{equation}{section}



\tikzset{
  treenode/.style = {align=center, inner sep=0pt, text centered,solid,thin,
    font=\sffamily},
  arn_n/.style = {treenode, circle, white, font=\sffamily\bfseries, draw=black,
    fill=black, text width=.5em},
  arn_nl/.style = {treenode, circle, white, font=\sffamily\bfseries, draw=black,
    fill=black, text width=2.5em},  
  arn_r/.style = {treenode, circle, red, draw=red, 
    text width=.5em, very thick},
  arn_v/.style = {treenode, circle, black, font=\sffamily\bfseries, draw=black, text width=1.2em},
  arn_x/.style = {treenode, rectangle, draw=black,
    minimum width=.5em, minimum height=0.5em},
  dott/.style={edge from parent/.style={dotted, very thick,circle,draw}},
  emph/.style={edge from parent/.style={dashed, very thick,circle,draw}},
  norm/.style={edge from parent/.style={solid,thin,circle,draw}}
}

\begin{document}

\title{Euler obstructions for the Lagrangian Grassmannian}

\author{Paul LeVan}
\address{Department of Mathematics, University of Notre Dame, 255 Hurley, Notre Dame, IN 46556}
\email{plevan@nd.edu}

\author{Claudiu Raicu}
\address{Department of Mathematics, University of Notre Dame, 255 Hurley, Notre Dame, IN 46556\newline
\indent Institute of Mathematics ``Simion Stoilow'' of the Romanian Academy}
\email{craicu@nd.edu}

\subjclass[2020]{Primary 14M15, 14M12, 05C05, 32S05, 32S60}

\date{\today}

\keywords{Local Euler obstructions, Schubert stratification, Lagrangian Grassmannian, tree labelings}

\begin{abstract} 
 We prove a case of a positivity conjecture of Mihalcea--Singh, concerned with the local Euler obstructions associated to the Schubert stratification of the Lagrangian Grassmannian $LG(n,2n)$. Combined with work of Aluffi--Mihalcea--Sch\"urmann--Su, this further implies the positivity of the Mather classes for Schubert varieties in $LG(n,2n)$, which Mihalcea--Singh had verified for the other cominuscule spaces of classical Lie type. Building on the work of Boe and Fu, we give a positive recursion for the local Euler obstructions, and use it to show that they provide a positive count of admissible labelings of certain trees, analogous to the ones describing Kazhdan-Lusztig polynomials. Unlike in the case of the Grassmannians in types A and D, for $LG(n,2n)$ the Euler obstructions $e_{y,w}$ may vanish for certain pairs $(y,w)$ with $y\leq w$ in the Bruhat order. Our combinatorial description allows us to classify all the pairs $(y,w)$ for which $e_{y,w}=0$. Restricting to the big opposite cell in $LG(n,2n)$, which is naturally identified with the space of $n\times n$ symmetric matrices, we recover the formulas for the local Euler obstructions associated with the matrix rank stratification.
\end{abstract}

\maketitle

\section{Introduction}\label{sec:intro}

The goal of this note is to study the local Euler obstructions associated with the Schubert stratification of the Lagrangian Grassmannian $X=LG(n,2n)$. It was conjectured by Mihalcea--Singh \cite{mihalcea-singh}*{Conjecture~10.2} that these invariants are non-negative, in the more general context when $X$ is a \defi{cominuscule space} (also referred to as a \defi{compact Hermitian symmetric space} in the literature). The conjecture was verified in \cite{mihalcea-singh}*{Theorem~10.4} for the cominuscule spaces of classical Lie type other than $LG(n,2n)$, mainly as a consequence of the work of Boe and Fu \cite{boe-fu}, and of Bressler--Finkelberg--Lunts \cite{BFL} for the type A Grassmannian. The case $X=LG(n,2n)$ is also considered in \cite{boe-fu}, where a recursive and a combinatorial description of the local Euler obstructions is given, both of which are based on formulas with positive and negative contributions. Our input is to explain how to make these formulas positive, and in particular to confirm the Mihalcea--Singh conjecture for $LG(n,2n)$. We are moreover able to completely classify when the Euler obstructions vanish, and to recover the explicit formulas from \cite{zhang}*{Theorem~6.6} and \cite{lor-rai}*{Corollary~5.3} for the corresponding invariants associated to the rank stratification on the space of $n\times n$ symmetric matrices.

Following \cite{boe}*{Section~3}, we index the Schubert cells by words $w$ of length $n$ in the alphabet $\{\alpha,\beta\}$, write $X_w$ for the cell corresponding to $w$, and $\ol{X}_w$ for its closure.  Every word $w$ is uniquely represented by a \defi{path} in the $2$-plane starting at the origin, where each $\alpha$ represents a line segment from $(a,b)$ to $(a+1,b-1)$, and $\beta$ represents a line segment from $(a,b)$ to $(a+1,b+1)$. We denote the path associated to $w$ by $\pth(w)$, and we have for instance
\[
w = \beta\alpha\alpha\alpha\beta\beta\alpha\beta\beta
\qquad\longleftrightarrow\qquad
\pth(w)=
\begin{array}{c}
\begin{tikzpicture}[scale=0.5]
\draw [thick] (0,0)--(1,1)--(2,0)--(3,-1)--(4,-2)--(5,-1)--(6,0)--(7,-1)--(8,0)--(9,1);
\node[arn_n] at (0,0) {};
\node[arn_n] at (1,1) {};
\node[arn_n] at (2,0) {};
\node[arn_n] at (3,-1) {};
\node[arn_n] at (4,-2) {};
\node[arn_n] at (5,-1) {};
\node[arn_n] at (6,0) {};
\node[arn_n] at (7,-1) {};
\node[arn_n] at (8,0) {};
\node[arn_n] at (9,1) {};
\end{tikzpicture}
\end{array}
\]
The \defi{Bruhat order}, given by $y\leq w$ if $X_y \subseteq\ol{X}_w$, can be interpreted pictorially by the fact that no point on $\pth(y)$ lies strictly above $\pth(w)$. We define the \emph{local Euler obstructions coefficients}
\[ e_{y,w} = \Eu_{\ol{X}_w}(p)\text{ for }p\in X_y,\]
where $\Eu_V$ denotes MacPherson's local Euler obstruction function associated to a subvariety $V$ \cite{macpherson}*{Section~3}; see also \cite{lor-rai} for a leisurely treatment, with extensive references, of the theory of Euler obstructions in the closely related case of matrix spaces. We write $|w|$ for the \defi{length} of the word $w$, and whenever we write $e_{y,w}$ we implicitly assume that $|y|=|w|$. We will also write $|w|_{\alpha}$ for the number of $\alpha$'s in $w$, and define $|w|_{\beta}$ similarly. Our first result is the following positive recursion for the local Euler obstructions (see also \cite{boe-fu}*{Section~6} for a set of recursive relations involving both positive and negative contributions).

\begin{theorem}\label{thm:euler-recursion}
The local Euler obstructions are uniquely determined by the following recursive relations.
\begin{enumerate}
    \item $e_{w,w}=1$ and $e_{y,w}=0$ if $y\not\leq w$.
    \item If $y=y'\alpha$ and $w=w'\beta$ then $e_{y,w}=e_{y'\beta,w}$.
    \item If $y=y'\alpha\beta y''$ and $w=w'\sigma\tau w''$ with $|y'|=|w'|$ and $\sigma\tau\in\{\alpha\alpha,\beta\alpha,\beta\beta\}$ then $e_{y,w}=e_{y'\beta\alpha y'',w}$.
    \item If $y=y'\alpha\beta y''$ and $w=w'\alpha\beta w''$ with $|y'|=|w'|$ then $e_{y,w}=e_{y'\beta\alpha y'',w}+e_{y'y'',w'w''}$.
    \item If $y=y'\alpha\alpha$ and $w=w'\beta\alpha$ then $e_{y,w}=e_{y'\beta\beta,w}$.
    \item If $y=y'\alpha\alpha$ and $w=w'\alpha\alpha$ then $e_{y,w}=e_{y'\beta\beta,w}+e_{y',w'}$.
    \end{enumerate}
In particular, $e_{y,w}\geq 0$ for all $y,w$.
\end{theorem}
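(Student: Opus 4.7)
The plan is to prove the theorem by establishing three claims: (a) the rules (2)--(6) are exhaustive, so that for every pair $(y,w)$ with $y\leq w$ and $y\neq w$ some relation applies; (b) the recursion is well-founded, hence determines at most one function satisfying all six rules; (c) the actual Euler obstructions $e_{y,w}$ do satisfy each of (2)--(6). Positivity is then immediate, since the base values in (1) are non-negative and each non-base rule expresses $e_{y,w}$ as a sum of strictly earlier values.

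For (a), split on the last letter of $w$. If $w$ ends in $\beta$ and $y\leq w$ with $y\neq w$, a short argument with the path interpretation shows $y$ must end in $\alpha$ (otherwise we could strip a common $\beta$-suffix and iterate), so rule (2) applies. If $w$ ends in $\alpha$, distinguish two sub-cases. If $y$ contains an $\alpha\beta$ substring, pick any such occurrence (say the leftmost); rules (3) or (4) apply according to whether $w$ has $\alpha\beta$ or one of $\alpha\alpha,\beta\alpha,\beta\beta$ at the same two positions. If $y$ contains no $\alpha\beta$ pattern, then $y=\beta^k\alpha^{n-k}$, and combining the path condition $y\leq w$ with $y\neq w$ and $w$ ending in $\alpha$ forces $y$ to end in $\alpha\alpha$, so rule (5) or (6) applies. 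For (b), introduce the signed area $A(v)$ under $\pth(v)$; on $\{y:y\leq w\}$, the non-negative integer $A(w)-A(y)$ strictly decreases under rules (2), (3), (5), while in (4) and (6) one summand has strictly smaller $A(w)-A(y)$ and the other has strictly shorter length. Lexicographic decrease on $(|w|, A(w)-A(y))$ gives termination.

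The heart of the argument is (c). Here I would start from the recursive formulas of Boe--Fu \cite{boe-fu}*{Section~6}, which determine $e_{y,w}$ but carry both positive and negative contributions. For each of (2), (3), (5), whose right-hand side is a single $e$-value, a single Boe--Fu identity combined with the induction hypothesis (on $|w|$ and on $A(w)-A(y)$) should match the claimed equation, with the ``correction'' terms on the two sides cancelling by induction. For (4) and (6), whose right-hand sides split into two positive summands, the plan is to superimpose several Boe--Fu identities in such a way that their negative contributions pairwise cancel and the surviving positive terms reorganize into exactly the two claimed summands — the ``shorter length'' summand should arise naturally as the $|w'|+|w''|$ piece that appears when the $\alpha\beta$ (or $\alpha\alpha$) is effectively removed. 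I expect this cancellation step to be the main obstacle: it is a delicate matching argument sensitive to the patterns in $y$ and $w$ near the swap position, and will likely require auxiliary identities comparing $e$-values on closely related pairs before the negative contributions can be paired off cleanly.
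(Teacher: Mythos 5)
Your overall architecture matches the paper's: extract recursions from Boe--Fu, show the six relations determine all values, and read off positivity. But the heart of the matter --- your step (c), verifying that the true obstructions satisfy (2)--(6) --- is not actually carried out, and it is precisely the content of the paper's proof. The paper's input is two facts distilled from Boe--Fu (their Lemmas 6.2C, 6.2D): if $w\nless ws$ then $e_{y,w}=e_{ys,w}$; and if $y<ys$, $w<ws$ for $s=s_i$, then $e_{y,w}=e_{ys,w}+(-1)^r e_{\bar y,\bar w}$ for explicit shorter words $\bar y,\bar w$, with $r=0$ when $i<n$ and $r=\cpc_{s_i}(y,w)$ when $i=n$. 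From these, (2) and (3) are single applications of the first fact with no correction terms at all (contrary to your expectation of corrections cancelling by induction), and (4) is a single application of the second fact with $i<n$, where the sign is automatically $+1$ --- no superposition of identities and no cancellation is needed there. The genuinely delicate cases are (5) and (6), which you lump into the ``single identity'' group: there the sign exponent is the capacity $c=\cpc_{s_n}(y,w)$, and the paper's derivation applies the double recursion at $s_n$ to get $e_{y,w}=e_{y'\alpha\beta,w}+(-1)^c e_{y'\alpha,w'\sigma}$, rewrites $e_{y'\alpha\beta,w}=e_{y'\beta\alpha,w}$ via (3), applies the double recursion at $s_n$ again to $y'\beta\alpha$ (whose capacity is $c-1$, so the sign flips), and then cancels the two signed terms: by relation (2) when $\sigma=\beta$ (yielding (5)), and by one further application of the double recursion at $s_{n-1}$ when $\sigma=\alpha$ (yielding (6)). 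Your proposal explicitly defers this step as ``the main obstacle'' and offers only a plan; without it, (5) and (6) --- and hence the positivity claim --- are unproved, so the proposal has a genuine gap exactly where the theorem is nontrivial.

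Two smaller points. In your exhaustiveness argument (a), the claim that $w$ ending in $\beta$ and $y<w$ forces $y$ to end in $\alpha$ is false (take $y=\alpha\beta\beta$, $w=\beta\alpha\beta$), and there is no ``strip a common $\beta$-suffix'' move among rules (1)--(6). The correct case split, as in the paper's uniqueness argument, is: if $y$ contains the subword $\alpha\beta$, apply (3) or (4); otherwise $y=\beta^k\alpha^{n-k}$, and then (2) applies when $w$ ends in $\beta$, while if $w$ ends in $\alpha$ one checks $y$ must end in $\alpha\alpha$ (the case $y=\beta^{n-1}\alpha$ being impossible there), so (5) or (6) applies. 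Your termination measure (length, then area between $\pth(y)$ and $\pth(w)$) is fine and plays the same role as the paper's induction on $(|y|,y)$ with the Bruhat order reversed in the second coordinate.
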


The non-negativity of Euler obstructions in Theorem~\ref{thm:euler-recursion} was conjectured in \cite{mihalcea-singh}*{Conjecture~10.2} in the more general setting of cominuscule spaces, and was verified in \cite{mihalcea-singh}*{Theorem~10.4} for the classical Lie types A, B and D. Mihalcea and Singh also obtained significant computational evidence in support of the conjecture for $LG(n,2n)$ (type C) and for the Cayley plane (type $E_6$). One of their motivations was to establish positivity properties for Mather classes \cite{mihalcea-singh}*{Conjecture~1.2(a)}. Using the positivity of the Schubert expansion of Chern--Schwartz--MacPherson classes of Schubert cells \cite{AMSS}*{Corollary~1.4}, the result for Mather classes can be deduced from the positivity of local Euler obstructions \cite{mihalcea-singh}*{Proposition 10.3(a)}. In particular, it follows from Theorem~\ref{thm:euler-recursion} that \cite{mihalcea-singh}*{Conjecture~1.2(a)} also holds for $LG(n,2n)$.

We illustrate Theorem~\ref{thm:euler-recursion} with some examples (which can be checked against \cite{mihalcea-singh}*{Table 3}).

\begin{example}\label{ex:worked-recurrence}
We consider \(y = \alpha \alpha \alpha \beta\), \(w = \beta \beta \alpha \beta\) and denote for clarity \( (y,w) := e_{y,w}\). To help clarify the use of the recursive formulas, we underline the subword in \(y\) to be changed and the corresponding position in \(w\), and we mark each resulting change in \(y\) in red:
\[
\renewcommand*{\arraystretch}{1.5}
\begin{array}{lclcl}
(\alpha \alpha \ul{\alpha \beta}, \beta \beta \ul{\alpha \beta}) & \overset{(4)}{=} & (\alpha \ul{\alpha \redit{\beta}} \redit{\alpha}, \beta \ul{\beta \alpha} \beta) + (\alpha \ul{\alpha}, \beta \ul{\beta}) & & \\

& \overset{(3),(2)}{=} & (\ul{\alpha \redit{\beta}} \redit{\alpha} \alpha, \ul{\beta \beta} \alpha \beta) + (\ul{\alpha \redit{\beta}}, \ul{\beta \beta}) & & \\

& \overset{(3),(3)}{=} & (\redit{\beta \alpha} \alpha \ul{\alpha}, \beta \beta \alpha \ul{\beta}) + (\redit{\beta} \ul{\redit{\alpha}}, \beta \ul{\beta}) & & \\

& \overset{(2),(2)}{=} & (\beta \alpha \ul{\alpha \redit{\beta}}, \beta \beta \ul{\alpha \beta}) + (\beta \redit{\beta}, \beta \beta) & & \\

& \overset{(4),(1)}{=} & \left[(\beta \ul{\alpha \redit{\beta}} \redit{\alpha}, \beta \ul{\beta \alpha} \beta) + (\beta \ul{\alpha}, \beta \ul{\beta})\right] + 1 & & \\

& \overset{(3),(2)}{=} & (\beta \redit{\beta \alpha} \ul{\alpha}, \beta \beta \alpha \ul{\beta}) + (\beta \redit{\beta}, \beta \beta) + 1 & & \\

& \overset{(2),(1)}{=} & (\beta \beta \alpha \redit{\beta}, \beta \beta \alpha \beta) + 2 = 3 & & \\
\end{array}
\]
If we keep $y$ the same and take $w=\beta\beta\alpha\alpha$, then (with some steps omitted) we have that
\[(\alpha \alpha \alpha \beta, \beta \beta \alpha \alpha) \overset{(3)}{=}
(\beta \alpha \alpha \alpha , \beta \beta \alpha \alpha) \overset{(6)}{=}
(\beta \alpha \beta \beta , \beta \beta \alpha \alpha) + (\beta \alpha , \beta \beta) \overset{(1),(2)}{=} 0 + (\beta \beta , \beta \beta) \overset{(1)}{=} 1,
\]
and if we take $y=\alpha\alpha\alpha\alpha$, $w=\beta\beta\beta\alpha$, then
\[(\alpha \alpha \alpha \alpha, \beta \beta \beta \alpha) \overset{(5)}{=}
(\alpha \alpha \beta \beta , \beta \beta \beta \alpha) \overset{(3)}{=}
(\beta \beta \alpha \alpha , \beta \beta \beta \alpha) \overset{(5)}{=}  (\beta \beta \beta \beta , \beta \beta \beta \alpha) \overset{(1)}{=} 0.
\]
\end{example}

Even though Theorem~\ref{thm:euler-recursion} completely settles the non-negativity of Euler obstructions, it is difficult to use in practice, for instance in order to understand when $e_{y,w}=0$. Our next goal is then to obtain a non-recursive, combinatorial description of the obstructions $e_{y,w}$. To do so, based on the standard combinatorial models from \cites{las-sch,boe,boe-fu} we associate to every word $w$ a rooted tree $A(w)$, and to every pair $(y,w)$ a \defi{diagram} (\defi{decorated tree}) $A(w/y)$, consisting of the tree $A(w)$ together with some additional data at each leaf, called the \defi{capacity} of the corresponding terminal edge. We refer the reader to Section~\ref{sec:trees} for the details of the construction and terminology, but we show here an example to give a flavor of the combinatorics involved.

\begin{example}\label{ex:A-w-y}
 Let $w=\beta\beta\alpha\beta\alpha\alpha\alpha\beta\alpha\alpha\beta\alpha\beta\beta\alpha$, and $y=\alpha\beta\alpha\alpha\alpha\alpha\alpha\alpha\beta\alpha\alpha\alpha\alpha\alpha\beta$. The paths of $y$ and $w$, as well as the corresponding capacities, and the decorated tree $A(w/y)$ are pictured below.
\[
\begin{array}{c}
\begin{tikzpicture}[scale=0.5]
\draw [thick] (0,0)--(1,1)--(2,2)--(3,1)--(4,2)--(5,1)--(6,0)--(7,-1)--(8,0)--(9,-1)--(10,-2)--(11,-1)--(12,-2)--(13,-1)--(14,0)--(15,-1);
\node[arn_n] at (0,0) {};
\node[arn_n] at (1,1) {};
\node[arn_n] at (2,2) {};
\node[arn_n] at (3,1) {};
\node[arn_n] at (4,2) {};
\node[arn_n] at (5,1) {};
\node[arn_n] at (6,0) {};
\node[arn_n] at (7,-1) {};
\node[arn_n] at (8,0) {};
\node[arn_n] at (9,-1) {};
\node[arn_n] at (10,-2) {};
\node[arn_n] at (11,-1) {};
\node[arn_n] at (12,-2) {};
\node[arn_n] at (13,-1) {};
\node[arn_n] at (14,0) {};
\node[arn_n] at (15,-1) {};
\draw [thick] (0,0)--(1,-1)--(2,0)--(3,-1)--(4,-2)--(5,-3)--(6,-4)--(7,-5)--(8,-6)--(9,-5)--(10,-6)--(11,-7)--(12,-8)--(13,-9)--(14,-10)--(15,-9);
\node[arn_n] at (1,-1) {};
\node[arn_n] at (2,0) {};
\node[arn_n] at (3,-1) {};
\node[arn_n] at (4,-2) {};
\node[arn_n] at (5,-3) {};
\node[arn_n] at (6,-4) {};
\node[arn_n] at (7,-5) {};
\node[arn_n] at (8,-6) {};
\node[arn_n] at (9,-5) {};
\node[arn_n] at (10,-6) {};
\node[arn_n] at (11,-7) {};
\node[arn_n] at (12,-8) {};
\node[arn_n] at (13,-9) {};
\node[arn_n] at (14,-10) {};
\node[arn_n] at (15,-9) {};
\draw [dashed] (3,1)--(3,-1); 
\node at (3.4,0) {$1$};
\draw [dashed] (7,-1)--(7,-5);
\node at (7.4,-3) {$2$};
\node at (8,-7) {$\pth(y)$};
\draw [dashed] (10,-2)--(10,-6);
\node at (10.4,-4) {$2$};
\node at (11,0) {$\pth(w)$};
\draw [dashed] (12,-2)--(12,-8);
\node at (12.4,-5) {$3$};
\draw [dashed] (15,-1)--(15,-9);
\node at (15.4,-5) {$4$};
\end{tikzpicture}
\end{array}
\qquad\longleftrightarrow\qquad
\begin{array}{c}
\begin{tikzpicture}[-,>=stealth',level/.style={sibling distance = 5cm/#1,level distance = 1.5cm}] 
\node [arn_n] {}
child{ node [arn_n] {1}
}
child[grow=south east,emph]{ node [arn_n] {}
  child[grow=south east,emph]{node [arn_n] {}
    child[grow=south west,norm]{ node [arn_n] {2} 
    }
    child[grow=south,norm]{ node [arn_n] {}
            child{ node [arn_n] {2} 
            }
            child{ node [arn_n] {3}
            }
		}
    child[grow=south east,emph]{ node [arn_n] {4}
  edge from parent node[above right] {$E_1$}
    }
  edge from parent node[above right] {$E_2$}
  }
  edge from parent node[above right] {$E_3$}
};
\node at (-1,-4) {$A(w/y)$};
\end{tikzpicture}
\end{array}
\]
The definition of capacities is given in \eqref{eq:def-cap-regular}, \eqref{eq:def-cap-distinguished}, the dotted edges $E_1,E_2,E_3$ are called \defi{distinguished edges}, while the remaining ones are \defi{regular edges}. The \defi{terminal edges} are the ones incident to leaves of the tree, and we write $\cpc(T)$ for the capacity of a terminal edge $T$.
\end{example}

We can now define \defi{admissible labelings}, which are essential to our non-recursive description of the local Euler obstructions. For edges $F,F'$, we write $F\leq F'$ if $F$ belongs to the shortest path joining $F'$ to the root.

\begin{definition}\label{def:admissible-label}
 We write $E_1,\cdots,E_r$ for the distinguished edges in $A(w/y)$. An \defi{admissible labeling} of $A(w/y)$ is a function that assigns to each edge $F$ a non-negative integer $\ell(F)$, satisfying the following properties:
 \begin{enumerate}
     \item If $F\leq F'$ then $\ell(F)\leq \ell(F')$.
     \item If $F$ is a regular terminal edge, then $\ell(F)\leq\cpc(F)$.
     \item\label{it:lE1-z0-empty} If $E_1$ is a terminal edge then $\ell(E_1)=\cpc(E_1)$.
     \item For every odd distinguished edge $E_{2i-1}$, $i\geq 1$, we have
     \[\ell(E_{2i-1}) = \min\{\ell(F) : F > E_{2i-1} \},\]
     with the exception when $i=1$ and $E_1$ is terminal, in which case $\ell(E_1)$ was given in \eqref{it:lE1-z0-empty}.
     \item For every $i\geq 1$ we have
     \[\ell(E_{2i-1}) \equiv \ell(E_{2i}) \ (\opmod 2),\]
     where we make the convention that $\ell(E_j)=0$ for $j>r$ (hence $\ell(E_r)$ is even if $r$ is odd).
 \end{enumerate}
If $A(w/y)$ has no edges then the empty labeling is the unique admissible labeling of $A(w/y)$.
\end{definition}

We will show that the diagram $A(w/y)$ in Example~\ref{ex:A-w-y} has $124$ admissible labelings (see Example~\ref{ex:124-labelings}), but for now we illustrate Definition~\ref{def:admissible-label} with two examples.

\[\begin{tikzpicture}[-,>=stealth',level/.style={sibling distance = 5cm/#1,level distance = 1.5cm},scale=0.7] 
\node [arn_n] {}
child{ node [arn_n] {1}
  edge from parent node[above left] {$1$}
}
child[grow=south east,emph]{ node [arn_n] {}
  child[grow=south east,emph]{node [arn_n] {}
    child[grow=south west,norm]{ node [arn_n] {2}
    edge from parent node[xshift=-.15cm,yshift=.15cm] {$0$}
    }
    child[grow=south,norm]{ node [arn_n] {}
            child{ node [arn_n] {2}
            edge from parent node[left] {$2$}
            }
            child{ node [arn_n] {3}
            edge from parent node[right] {$1$}
            }
        edge from parent node[xshift=.15cm] {$1$}
		}
    child[grow=south east,emph]{ node [arn_n] {4}
  edge from parent node[above right] {$4$}
    }
  edge from parent node[above right] {$0$}
  }
  edge from parent node[above right] {$0$}
};
\node at (-1,-4) {admissible};
\end{tikzpicture}
\qquad\qquad\qquad\qquad\qquad
\begin{tikzpicture}[-,>=stealth',level/.style={sibling distance = 5cm/#1,level distance = 1.5cm},scale=0.7] 
\node [arn_n] {}
child{ node [arn_n] {1}
  edge from parent node[above left] {$1$}
}
child[grow=south east,emph]{ node [arn_n] {}
  child[grow=south east,emph]{node [arn_n] {}
    child[grow=south west,norm]{ node [arn_n] {2}
    edge from parent node[xshift=-.15cm,yshift=.15cm] {$1$}
    }
    child[grow=south,norm]{ node [arn_n] {}
            child{ node [arn_n] {2}
            edge from parent node[left] {$2$}
            }
            child{ node [arn_n] {3}
            edge from parent node[right] {$1$}
            }
        edge from parent node[xshift=.15cm] {$2$}
		}
    child[grow=south east,emph]{ node [arn_n] {4}
  edge from parent node[above right] {$3$}
    }
  edge from parent node[above right] {$1$}
  }
  edge from parent node[above right] {$1$}
};
\node at (-2,-4) {non-admissible};
\end{tikzpicture}
\]
There are several reasons why the diagram on the right is non-admissible: it fails condition (3) because $\ell(E_1)=3\neq 4=\cpc(E_1)$, it fails condition (5) because $r=3$ and $\ell(E_3)$ is odd, and it fails condition (1) because of the existence of edges $F,F'$ with $\ell(F)=2$, $\ell(F')=1$ and $F\leq F'$.

The next result establishes the relationship between labelings and local Euler obstructions.

\begin{theorem}\label{thm:count-trees}
 The local Euler obstruction $e_{y,w}$ is equal to the number of admissible labelings of $A(w/y)$.
\end{theorem}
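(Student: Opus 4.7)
My plan is to prove Theorem~\ref{thm:count-trees} by showing that the counting function $N(y,w) := \#\{\text{admissible labelings of } A(w/y)\}$ satisfies exactly the six recursive relations that characterize $e_{y,w}$ in Theorem~\ref{thm:euler-recursion}. Since those relations determine the numbers uniquely, matching them on the nose forces $N(y,w)=e_{y,w}$. I would proceed by induction on $|w|$, treating the base case (1) directly and then verifying each of (2)--(6) as a statement about diagrams and labelings.

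For the base case, when $y=w$ the diagram $A(w/y)$ has no edges (all capacities vanish and no distinguished edges appear), so the unique empty labeling gives $N(w,w)=1$. When $y\not\leq w$, at least one point of $\pth(y)$ strictly crosses above $\pth(w)$, which I expect to translate in the construction of Section~\ref{sec:trees} into either a negative capacity on some regular terminal edge (violating condition~(2)) or an illegal configuration at the distinguished edge $E_1$ (violating condition~(3)), so that no admissible labeling exists. For the "no-change" rules (2), (3) and (5), I would check directly from the construction in \eqref{eq:def-cap-regular}--\eqref{eq:def-cap-distinguished} (as referenced in the paper) that the local modifications of $y$ specified in those rules do not alter the tree $A(w)$, nor any capacity, nor the distinguished/regular status of any edge, so $A(w/y)$ is literally the same diagram on both sides of the equation.

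The substantive work is in the additive rules (4) and (6), where I must exhibit an explicit bijection
\[
\bigl\{\text{admissible labelings of } A(w/y)\bigr\} \;\longleftrightarrow\; \bigl\{\text{admissible labelings of } A(w/y_{\operatorname{swap}})\bigr\} \;\sqcup\; \bigl\{\text{admissible labelings of } A(w_{\operatorname{cut}}/y_{\operatorname{cut}})\bigr\}.
\]
For rule (4), where $y=y'\alpha\beta y''$ and $w=w'\alpha\beta w''$, the aligned $\alpha\beta/\alpha\beta$ pair should produce a specific distinguished edge (playing the role of $E_1$ or the first distinguished edge in a branch) with capacity one greater than in the swapped diagram. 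I plan to split labelings of $A(w/y)$ according to whether the label on that edge is $0$ (matching the smaller cut diagram $A(w'w''/y'y'')$) or positive (matching, after subtracting $1$ along a chain respecting monotonicity, the swapped diagram $A(w/y'\beta\alpha y'')$). The parity condition~(5) of Definition~\ref{def:admissible-label} must be tracked carefully, since simultaneous decrements of $\ell(E_{2i-1})$ and $\ell(E_{2i})$ preserve parity, which is exactly what makes the decrement operation give an admissible labeling. Rule (6), with aligned $\alpha\alpha/\alpha\alpha$ at the end, admits an analogous split at the relevant terminal edge.

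The main obstacle will be precisely this bookkeeping for (4) and (6): identifying which edge in $A(w/y)$ corresponds to the "absorbing" degree of freedom, and verifying that decrementing its label (and propagating the change down to preserve monotonicity, parity, and the minimum conditions on odd distinguished edges) is a bijection onto admissible labelings of the swapped diagram. Once this local analysis is complete for both (4) and (6), and the invariance under (2), (3), (5) is checked against the construction of $A(w/y)$, the induction closes and Theorem~\ref{thm:count-trees} follows from Theorem~\ref{thm:euler-recursion}.
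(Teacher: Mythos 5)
Your overall strategy coincides with the paper's: show that the labeling count satisfies relations (1)--(6) of Theorem~\ref{thm:euler-recursion} and conclude by uniqueness of the solution to the recursion. However, several concrete claims in your plan are incorrect, and they sit exactly where the real work lies. First, rule (5) is \emph{not} a ``no-change'' rule: since $w=w'\beta\alpha$ ends in $\alpha$, the edge $E_1$ is terminal and $\cpc(E_1)=|y|_{\alpha}-|w|_{\alpha}$ drops by $2$ when $y'\alpha\alpha$ is replaced by $y'\beta\beta$, so $A(w/y)\neq A(w/y'\beta\beta)$. The equality of counts requires the parity condition (5) of Definition~\ref{def:admissible-label} \emph{and} the nontrivial bound $\ell(E_2)\leq\cpc(E_1)-2$ for every admissible labeling when there are at least two distinguished edges (the paper's Lemma~\ref{lem:rec-3-2}); that bound is what makes ``lower $\ell(E_1)$ by $2$'' a bijection compatible with monotonicity. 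Only rules (2) and (3) leave the decorated tree literally unchanged. (Also a small slip in your base case: for $y=w$ the tree $A(w)$ generally has many edges, including distinguished ones; what forces $N(w,w)=1$ is that all capacities vanish, so the all-zero labeling is the unique admissible one.)

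Second, in rule (4) the aligned trough $\alpha\beta$ of $w$ corresponds to a \emph{regular} terminal edge $F$, not a distinguished edge, and the correct dichotomy is $\ell(F)=\cpc_y(F)$ versus $\ell(F)\leq\cpc_y(F)-1$, not $\ell(F)=0$ versus $\ell(F)>0$. Passing from $y$ to $y'\beta\alpha y''$ changes only $\cpc(F)$, by $1$, so the labelings with $\ell(F)<\cpc_y(F)$ are verbatim the admissible labelings of $A(w/y'\beta\alpha y'')$ --- no ``subtract $1$ along a chain'' operation is needed, and such a decrement-and-propagate map would not obviously be a bijection (it interferes with sibling monotonicity, the minimum condition (4), and parity). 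The genuinely delicate half is the other one: deleting $F$ must give a bijection between capacity-saturated labelings and admissible labelings of $A(w'w''/y'y'')$, which requires showing that the constraint $\ell(F')\leq\cpc_y(F)$ that $F$ imposed on the edge $F'$ beneath it is automatically enforced in $A(w'w'')$ by some other terminal edge of capacity at most $\cpc_y(F)$ (or by $F'$ itself becoming terminal) --- this is the paper's Lemma~\ref{lem:rec-2}. Likewise rule (6) is not ``an analogous split at the relevant terminal edge'': there the cut diagram $A(w'/y')$ is obtained by removing the \emph{two} distinguished edges $E_1,E_2$, so the remaining distinguished edges are re-indexed with opposite parity, and one must verify that conditions (3)--(5) survive this shift (the paper's Lemma~\ref{lem:rec-3-1}). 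Without supplying these three lemmas, the induction you describe does not close.
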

 
For the Grassmannians in type A and D with their corresponding Schubert stratifications, we have $e_{y,w}>0$ whenever $y\leq w$, due to the fact that the Euler obstructions can be calculated as the value at $1$ of Kazhdan--Lusztig polynomials (see \cite{boe-fu}*{Remark~6.1B}, \cite{mihalcea-singh}*{Theorem~10.4}). By contrast, for $LG(n,2n)$ the relation with Kazhdan--Lusztig polynomials is more subtle, due to the presence of reducible characteristic cycles for the intersection cohomology sheaves of the strata. In particular, the Euler obstructions can often be $0$, and it is interesting to study when this occurs. The next result identifies the main source of vanishing for Euler obstructions (see Theorem~\ref{thm:vanishing} for the complete characterization).

\begin{theorem}\label{thm:some-vanishing}
 Suppose that $\pth(y)$ \defi{lies strictly below} $\pth(w)$, that is, $y\leq w$ and $(0,0)$ is the only point that $\pth(y)$ and $\pth(w)$ have in common. We have that $e_{y,w}=0$ if and only if the following conditions hold:
\begin{itemize}
    \item the number of distinguished edges in $A(w/y)$ is odd,
    \item $E_1$ is a terminal edge and $\cpc(E_1)$ is odd,
    \item for all $i\geq 1$, the unique node incident to $E_{2i}$ and $E_{2i+1}$ is not incident to any other edge.
\end{itemize}
\end{theorem}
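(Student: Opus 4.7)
The plan is to invoke Theorem~\ref{thm:count-trees}, which reduces the vanishing question to the (non-)existence of admissible labelings of the diagram $A(w/y)$. A preliminary step, drawing on the constructions in Section~\ref{sec:trees}, is to record the structure of $A(w/y)$: the distinguished edges $E_1,\ldots,E_r$ form a chain in the tree (consecutive $E_i$'s share a unique node), with the remaining regular edges hanging off this chain as bushes, and the strict-below hypothesis further restricts where such bushes may occur. All subsequent arguments are carried out with respect to this chain structure.

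For the forward direction, assume the three stated conditions. By Definition~\ref{def:admissible-label}(3), every admissible labeling satisfies $\ell(E_1)=\cpc(E_1)$, which is odd by condition~(ii), and then Definition~\ref{def:admissible-label}(5) forces $\ell(E_2)$ to be odd. Condition~(iii) with $i=1$ says the node joining $E_2$ and $E_3$ has degree~$2$, so every edge $F>E_3$ is either $E_2$ or a descendant of $E_2$; monotonicity gives $\ell(F)\geq \ell(E_2)$ for each such $F$, and Definition~\ref{def:admissible-label}(4) forces $\ell(E_3)=\ell(E_2)$, which is odd. Iterating along the chain using condition~(iii) at each $i$, one concludes that $\ell(E_j)$ is odd for every $1\leq j\leq r$. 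Since $r$ is odd by condition~(i), the convention $\ell(E_{r+1})=0$ in Definition~\ref{def:admissible-label}(5) forces $\ell(E_r)$ to be even, a contradiction; hence no admissible labeling exists and $e_{y,w}=0$.

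For the reverse direction, I exhibit an admissible labeling whenever one of the three conditions fails. If $E_1$ is not terminal, the identically zero labeling is admissible. If $E_1$ is terminal with $c:=\cpc(E_1)$ even, setting $\ell(E_1)=c$ and $\ell(F)=0$ on every other edge works: the parity rule is satisfied because $c$ is even, Definition~\ref{def:admissible-label}(4) gives $\ell(E_{2i-1})=0$ for $i\geq 2$ since the minimum over descendants picks up the zero-labeled $E_2$, and monotonicity holds since $\ell(E_2)=0\leq c$. If $r$ is even and $c$ is odd, setting $\ell(E_1)=c$, $\ell(E_j)=1$ for $2\leq j\leq r$, and filling in the bushes with the smallest compatible labels produces an admissible labeling; the absence of a phantom pairing at $E_r$ avoids the forward-direction contradiction. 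Finally, if condition~(iii) fails at some $i^*$, the extra edge $F_0$ attached at the node between $E_{2i^*}$ and $E_{2i^*+1}$ can be assigned $\ell(F_0)=0$, forcing $\ell(E_{2i^*+1})=0$ via Definition~\ref{def:admissible-label}(4) and breaking the odd-parity propagation from $E_1$; everything above $E_{2i^*+1}$ is then labeled zero.

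The main obstacle lies in the reverse direction, where one must verify that the labelings described above respect the capacity bounds on the regular terminal edges in the bushes. For instance, in the $r$-even case one needs the bush-edge capacities to be at least $1$; more generally, the chain labels imposed by parity and monotonicity must not exceed the leaf capacities of any bush that hangs from the corresponding chain edge. Establishing this compatibility should follow from a careful reading of the capacity formulas in Section~\ref{sec:trees} together with the structural restrictions on $A(w/y)$ imposed by the strict-below hypothesis, and is the main non-routine step of the argument.
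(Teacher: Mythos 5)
Your proposal is correct and follows essentially the same route as the paper's proof of Theorem~\ref{thm:vanishing}(1): reduce to admissible labelings via Theorem~\ref{thm:count-trees}, derive the contradiction in the forward direction by propagating odd parity along the chain $E_1,\dots,E_r$ using Definition~\ref{def:admissible-label}(3)--(5) and the degree-two condition at the nodes between $E_{2i}$ and $E_{2i+1}$, and in the reverse direction exhibit explicit labelings (all zero; $\ell(E_1)=c$ and zero elsewhere; $\ell(E_1)=c$ with $1$'s along the chain; a zero on the extra edge at the offending node) in the same cases the paper treats. The only step you defer---compatibility of these labelings with the capacity bounds---is exactly the paper's opening one-line observation: since $\pth(y)$ and $\pth(w)$ meet only at the origin, the vertical gap between the paths is positive at every point with positive abscissa, so by \eqref{eq:def-cap-regular} and \eqref{eq:def-cap-distinguished} every terminal edge of $A(w/y)$ has $\cpc(T)\geq 1$, which makes all the labels $0$, $1$, $c$ you use admissible; so the step you single out as the main obstacle is in fact immediate rather than non-routine.
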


If we consider the diagram $A(w/y)$ in Example~\ref{ex:A-w-y} and we change the capacity of $E_1$ to an odd number, then the resulting diagram satisfies all the conditions in Theorem~\ref{thm:some-vanishing}, and in particular it corresponds to a vanishing Euler obstruction (see also Example~\ref{ex:w-y-intersect}).

As a final application, we consider the space of $n\times n$ symmetric matrices, equipped with the rank stratification, and we define $e_{i,j}$ to be the corresponding local Euler obstructions. It is shown in \cite{zhang}*{Theorem~6.6} and~\cite{lor-rai}*{Corollary~5.3} that
\[
e_{i,j} = \begin{cases}
 0 & \text{ if }n-i\text{ is even and }n-j\text{ is odd}; \\
 \displaystyle{\lfloor\frac{n-i}{2}\rfloor \choose \lfloor\frac{j-i}{2}\rfloor} & \text{otherwise}.
 \end{cases}
\]
Using the fact that the space of symmetric matrices arises naturally as the big opposite cell in $LG(n,2n)$, we explain in the last section of the paper how the formula above is a direct consequence of Theorem~\ref{thm:count-trees}.
 
\bigskip

\noindent{\bf Organization.} In Section~\ref{sec:recursion-eyw} we explain how the work of Boe and Fu leads to a positive recursion for the local Euler obstructions, and in particular it implies their positivity properties. In Section~\ref{sec:trees} we explain the combinatorics of decorated trees and admissible labelings, and prove Theorem~\ref{thm:count-trees}. We then apply this theorem in Section~\ref{sec:vanishing} to characterize all the pairs $(y,w)$ for which $e_{y,w}=0$, and we conclude in Section~\ref{sec:symm-mat} with the derivation of the local Euler obstructions for symmetric matrices.
 
\section{The recursion for local Euler obstructions}\label{sec:recursion-eyw}

The goal of this section is to explain how the results of \cite{boe-fu} lead to a proof of Theorem~\ref{thm:euler-recursion}. We begin by recalling basic facts about the Lagrangian Grassmannian \(LG(n,2n)\) and its associated Schubert stratification. We fix the symplectic vector space $(\bb{C}^{2n},\omega)$, where
\[
\omega(\vec{e}_i,\vec{e}_j) = \begin{cases}
1 & i+j=2n+1,\ i\leq n, \\
-1 & i+j=2n+1,\ i\geq n+1,\\
0 & \text{otherwise},
\end{cases}
\]
and $\{\vec{e}_i\}_i$ denotes the standard basis in $\bb{C}^{2n}$. A subspace $L\subseteq \bb{C}^{2n}$ is \defi{isotropic} if $\omega(f,g)=0$ for all $f,g\in L$. We say that $L$ is \defi{Lagrangian} if $L$ is isotropic and is maximal with respect to inclusion. $X=LG(n,2n)$ is the parameter space for Lagrangian subspaces in \(LG(n,2n)\).

The \defi{Schubert stratification} of $X$ is naturally indexed by \defi{symmetric partitions} $\lambda=(\lambda_1,\cdots,\lambda_n)$ with
\[ n\geq \lambda_1 \geq \cdots \geq \lambda_n\geq 0.\]
Here, symmetric means that $\lambda=\lambda'$, where $\lambda'$ is the \defi{conjugate partition} given by
\[ \lambda'_j = |\{i : \lambda_i\geq j\}|.\]
An example of such a partition is $\lambda = (7,6,6,3,3,3,1)$. If we write
\[ \bb{C}^s = \op{Span}(\vec{e}_1,\cdots,\vec{e}_s)\]
then the Schubert cell corresponding to $\lambda$ is given by
\begin{equation}\label{eq:def-X-lam}
X_{\lambda} = \left\{ L \in LG(n,2n) \ \middle| \ \operatorname{dim}\left(L \cap \bb{C}^{\lambda_{n+1-i} + i}\right) = i \text{ for } i \in \{1,\ldots, n\}\right\}.
\end{equation}
The closure of these cells are the corresponding Schubert varieties $\ol{X}_{\lambda}$, which can be described by replacing the equality above with \(\geq\). 

It will be useful to use a different parametrization of the Schubert cells, using words $w$ of length $n$ in the alphabet $\{\alpha,\beta\}$ (see also \cite{boe}*{Section~3}, \cite{boe-fu}*{p457}). The relation between a word $w$ and $\pth(w)$ was described in the introduction, and passing from a symmetric partition to the path of the corresponding word is illustrated best through an example. 
\[
\begin{array}{c}
\begin{tikzpicture}[scale=0.5]
\draw [thick] (0,0)--(7,-7);
\draw [thick] (1,1)--(8,-6);
\draw [thick] (3,1)--(9,-5);
\draw [thick] (4,2)--(10,-4);
\draw [thick] (8,0)--(11,-3);
\draw [thick] (9,1)--(12,-2);
\draw [thick] (10,2)--(13,-1);
\draw [thick] (13,1)--(14,0);
\draw [thick] (0,0)--(1,1);
\draw [thick] (1,-1)--(4,2);
\draw [thick] (2,-2)--(5,1);
\draw [thick] (3,-3)--(6,0);
\draw [thick] (4,-4)--(10,2);
\draw [thick] (5,-5)--(11,1);
\draw [thick] (6,-6)--(13,1);
\draw [thick] (7,-7)--(14,0);
\draw [dashed] (7,4)--(7,-7); 
\node at (7,-8) {$\lambda = (7,6,6,3,3,3,1)$};
\node[arn_n] at (0,0) {};
\node[arn_n] at (1,1) {};
\node[arn_n] at (2,0) {};
\node[arn_n] at (3,1) {};
\node[arn_n] at (4,2) {};
\node[arn_n] at (5,1) {};
\node[arn_n] at (6,0) {};
\node[arn_n] at (7,-1) {};
\end{tikzpicture}
\end{array}
\qquad\qquad
\begin{array}{c}
\begin{tikzpicture}[scale=0.5]
\draw [thick] (0,0)--(1,1)--(2,0)--(4,2)--(7,-1);
\node[arn_n] at (0,0) {};
\node[arn_n] at (1,1) {};
\node[arn_n] at (2,0) {};
\node[arn_n] at (3,1) {};
\node[arn_n] at (4,2) {};
\node[arn_n] at (5,1) {};
\node[arn_n] at (6,0) {};
\node[arn_n] at (7,-1) {};
\node at (3,-4) {$w = \beta\alpha\beta\beta\alpha\alpha\alpha$};
\node at (3,3) {$\pth(w)$};
\end{tikzpicture}
\end{array}
\]
To construct $\pth(w)$ for the word $w$ corresponding to $y$, we imagine the Young diagram of the symmetric partition $\lambda$ embedded into an $n\times n$ square, which we picture rotated by a $45^{\circ}$ angle. The (now) vertical diagonal of the square becomes an axis of symmetry for (the Young diagram of) $\lambda$. Starting at the left corner of the rectangle, we follow the boundary of the rectangle until we reach $\lambda$, and then follow $\lambda$, stopping when the axis of symmetry is reached. Our convention is that the left corner of the square has coordinates $(0,0)$, which is the starting point of every $\pth(w)$.

From now on we write $X_w$ for $X_{\lambda}$, where $w$ is the word corresponding to $\lambda$. The \defi{Bruhat order}, given by $y\leq w$ if and only if ($|y|=|w|$ and) $X_y\subseteq\ol{X}_w$, can be rephrased by the condition that $\pth(y)$ lies on or below $\pth(w)$. In particular, this is compatible with concatenation of words:
\[ y'\leq w'\text{ and }y''\leq w'' \Rightarrow y'y''\leq w'w''.\]
Thinking of $LG(n,2n)$ as a homogeneous space for the symplectic group $Sp(2n)$ preserving the form $\omega$, we get a natural action of the associated Weyl group $C_n$. Recall that $C_n$ is the \defi{hyperoctahedral} group of signed permutations, and it is generated by simple reflections $s_1,\cdots,s_n$. The (right) action of the simple reflections on the Schubert cells, or the corresponding words, is given as follows:
\begin{itemize}
    \item If \(i < n\) then \(ws_i\) is the word given by swapping the \(i\)th and \((i+1)\)st symbols of \(w\).
    \item \(ws_n\) is the word given by changing the last symbol in \(w\) from $\sigma$ to $\tau$ where $\{\sigma,\tau\}=\{\alpha,\beta\}$.
\end{itemize}
In the \(2\)-plane, we think of the vertical line over \((i,0)\) as corresponding to \(s_i\) as the action by this element will either interchange a local maximum of \(\pth(w)\) with a local minimum where \(\pth(w)\) intersects this line (and vice versa) or leave \(\pth(w)\) unchanged if it has no local extremum along this line. A local minimum in \(\pth(w)\) will be called a \defi{trough}, which corresponds to a subword \(\alpha \beta\) occurring in \(w\). Given two words \(y \leq w\), with a trough of \(\pth(w)\) occurring along the line corresponding to \(s_i\) we define the \defi{capacity} of \(\pth(w)\) over \(\pth(y)\) at \(s_i\), denoted \(\cpc_{s_i} (y,w)\), to be half the vertical distance from \(\pth(y)\) up to \(\pth(w)\) along the line at \(s_i\). If we write $y=y'y''$ and $w=w'w''$ with $|y'|=|w'|=i$ then
\[\cpc_{s_i} (y,w) = |y'|_{\alpha}-|w'|_{\alpha}.\]

To prove Theorem~\ref{thm:euler-recursion}, we will need the following two lemmas which are immediate consequences of \cite{boe-fu}*{Lemmas~6.2C,~6.2D}.

\begin{lemma}\label{lemma:bf-single-recur}
If \(y < w\) and if \(s\) is a simple reflection such that \(w \nless ws\) then \(e_{y,w} = e_{ys,w}\).
\end{lemma}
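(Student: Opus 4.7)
The plan is to derive the lemma from the $P_s$-equivariance of the local Euler obstruction of $\ol{X}_w$. First, the hypothesis $w \nless ws$ is the standard reformulation of $\ell(ws) \leq \ell(w)$, and by Bruhat decomposition this is precisely the condition under which the minimal parabolic subgroup $P_s$ associated to $s$ preserves the Schubert variety $\ol{X}_w$. (If instead $w < ws$, then the corresponding sweep would give $P_s \cdot \ol{X}_w = \ol{X}_{ws} \supsetneq \ol{X}_w$.)

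Once $P_s \cdot \ol{X}_w = \ol{X}_w$, the constructible function $\Eu_{\ol{X}_w} \colon X \to \bb{Z}$ is automatically $P_s$-invariant, since the local Euler obstruction is intrinsic to the germ of $\ol{X}_w$ at each point and is therefore preserved by every biregular automorphism of $X$ that stabilizes $\ol{X}_w$. To conclude, I would invoke the familiar $\SL_2$-picture along a single simple reflection: for $y \neq ys$, the $P_s$-orbit of any point $p \in X_y$ is a copy of $\bb{P}^1$ that meets $X_y$ in a single point and $X_{ys}$ in a dense open subset. Picking any $p' \in (P_s \cdot p) \cap X_{ys}$ then yields
\[
e_{ys,w} \;=\; \Eu_{\ol{X}_w}(p') \;=\; \Eu_{\ol{X}_w}(p) \;=\; e_{y,w},
\]
while the degenerate case $ys = y$ is trivial.

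The only bookkeeping detail is that $e_{ys,w}$ should be a bona fide Euler obstruction, i.e., $X_{ys} \subseteq \ol{X}_w$; but this is automatic from $X_y \subseteq \ol{X}_w$ together with $P_s$-invariance, since $X_{ys} \subseteq P_s \cdot X_y \subseteq \ol{X}_w$. I anticipate no serious obstacle: the argument is simply the $P_s$-equivariance of local Euler obstructions combined with the parabolic stabilizer description of Schubert varieties, which is exactly the geometric content packaged in the cited Lemmas~6.2C and~6.2D of Boe--Fu, so in practice one can just quote them.
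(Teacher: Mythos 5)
The paper does not actually prove this lemma: it is introduced as an ``immediate consequence'' of Boe--Fu, Lemmas~6.2C and 6.2D, so your closing remark that in practice one can just quote those lemmas is precisely what the paper does. Your sketched equivariance argument is a legitimate way to justify it directly, and its two pillars are sound: when $w \nless ws$ the relevant minimal parabolic stabilizes $\ol{X}_w$, and any automorphism of the ambient space preserving $\ol{X}_w$ preserves the constructible function $\Eu_{\ol{X}_w}$, which is constant on Schubert cells (this also is what makes $e_{y,w}$ well defined). However, one intermediate claim is wrong as stated: the $P_s$-orbit of a point $p\in X_y$ is \emph{not} a copy of $\bb{P}^1$ meeting $X_y$ in a single point. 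Since $B\subseteq P_s$ and the cells are $B$-orbits, the orbit $P_s\cdot p$ contains the entire cell $X_y$, and (when the cells differ) equals $X_y\cup X_{ys}$ as a set; the $\bb{P}^1$ you have in mind is a fiber of the fibration $G/B\to G/P_s$, not a $P_s$-orbit in $X$. This does not break the argument---all you actually use is that $P_s\cdot p$ meets $X_{ys}$, which is true---but the statement should be corrected. A second, bookkeeping, point: the Weyl group action on the words indexing the cells is a \emph{right} action in this paper, so the stabilization criterion ``$w\nless ws$ iff the minimal parabolic preserves $\ol{X}_w$'' must be set up on the matching side (realizing the cells as orbits for the action on that side, or passing through the full flag variety); with the left action the criterion would involve $sw$ rather than $ws$. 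With these repairs your argument is a correct self-contained proof, and it is presumably the geometric content behind the Boe--Fu lemmas, whereas the paper itself simply defers to them.
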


\begin{lemma}\label{lemma:bf-double-recur}
Suppose that \(y < w\), that \(s = s_i\) is a simple reflection such that \begin{equation}\label{eq:yys-wws}
    y < ys\text{ and }w < ws,
\end{equation}
and let \(c = \cpc_s (y,w)\) be the capacity corresponding to \(s\). There exist words \(\overline{y}, \overline{w}\) of smaller length such that
\[e_{y,w} = e_{ys,w} + (-1)^r e_{\overline{y}, \overline{w}},
\text{ where }
r = \begin{cases}
c & \text{if }i=n; \\
0 & \text{otherwise}.\\
\end{cases}\]
\begin{itemize}
    \item If \(1 \leq i < n\) then \eqref{eq:yys-wws} implies \(w = w' \alpha \beta w''\) and \(y = y' \alpha \beta y''\) where \(|w'| = |y'| = i-1\). In this case, one can take \(\overline{w} = w' w''\) and \(\overline{y} = y' y''\).
    \item If \(s = s_n\), then the conditions above imply \(w = w' \alpha\) and \(y = y' \alpha\) where \(|w'| = |y'| = n-1\). In this case, one can take \(\overline{w} = w'\) and \(\overline{y} = y'\).
\end{itemize}
\end{lemma}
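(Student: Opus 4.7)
The proof will proceed by directly translating Lemmas~6.2C and~6.2D of \cite{boe-fu} into the path/word formalism used throughout this paper; these are precisely the two cases $1\leq i<n$ and $i=n$ in the statement. The plan has three main steps.

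First, I would analyze the structural consequences of the hypotheses $y<ys$ and $w<ws$. Since $\pth(ws_i)$ differs from $\pth(w)$ only on the segment between $(i-1,*)$ and $(i+1,*)$, the condition $w<ws_i$ for $1\leq i<n$ forces $\pth(w)$ to have a trough at $(i,*)$, which is to say $w=w'\alpha\beta w''$ with $|w'|=i-1$; the same argument applied to $y$ yields $y=y'\alpha\beta y''$ with $|y'|=i-1$. For $i=n$, the condition $w<ws_n$ forces the endpoint of $\pth(w)$ to be a local minimum relative to the flipped path, which happens iff the last letter of $w$ is $\alpha$, and similarly for $y$. In both cases, the candidates $\overline w,\overline y$ named in the statement are the words obtained by erasing the two letters at positions $i,i+1$ (resp.\ the single letter at position $n$).

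Second, I would match this data with the input of Boe--Fu's recursions. Their Lemma~6.2C handles the case of a simple reflection that interchanges an adjacent trough, and gives a recursion with a \emph{positive} correction term, of the form $e_{y,w}=e_{ys,w}+e_{\overline y,\overline w}$, where $\overline y,\overline w$ are obtained by deletion of the trough positions. Their Lemma~6.2D handles the case of the "special" reflection at the end of the Dynkin diagram (our $s_n$), and this is the lemma that produces a \emph{signed} correction term $(-1)^{c}\,e_{\overline y,\overline w}$, with $c$ given by the vertical separation of the two paths at the active position, divided by $2$. These are exactly the two cases in the statement.

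Third, I would verify that Boe--Fu's notion of capacity agrees with the quantity $\cpc_s(y,w)$ defined in the paragraph preceding the lemma. Along the vertical line through $(i,0)$, the height of $\pth(w)$ is $|w'|_\beta-|w'|_\alpha=i-2|w'|_\alpha$, and similarly for $\pth(y)$, so half the vertical distance from $\pth(y)$ up to $\pth(w)$ equals $|y'|_\alpha-|w'|_\alpha$, which is the formula given for $\cpc_{s_i}(y,w)$. Plugging this into Boe--Fu's signed recursion yields the stated formula with $r=c$ when $i=n$ and $r=0$ otherwise.

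The main obstacle is the bookkeeping required to reconcile Boe--Fu's original conventions (their partition-and-pattern model and their own labeling of simple reflections in the type $C$ Dynkin diagram) with the path/word language used here, and in particular to confirm that the sign exponent appears only at the "special" end of the diagram. Once this correspondence is established, both recursions and the claim that $|\overline w|=|\overline y|<|w|$ follow immediately from the construction.
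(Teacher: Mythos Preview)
Your proposal is correct and follows the same approach as the paper: the paper does not give a proof of this lemma but simply states that it is an immediate consequence of \cite{boe-fu}*{Lemmas~6.2C,~6.2D}, which is exactly the translation you outline. Your more detailed account of how the word/path conventions match Boe--Fu's setup is a faithful expansion of that citation.
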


The results above allow us to establish the recurrence relations stated in Theorem~\ref{thm:euler-recursion}, which combined together will imply the non-negativity of \(e_{y,w}\).

\begin{proof}[Proof of Theorem~\ref{thm:euler-recursion}(1)]
 Since $X_w$ is smooth, it is contained in the regular locus of $\ol{X}_w$, which implies $e_{w,w}=1$ \cite{macpherson}*{Section~3}. If $y\not\leq w$ then $X_y$ is disjoint from $\ol{X}_w$, hence $e_{y,w}=0$.
\end{proof}

\begin{proof}[Proof of Theorem~\ref{thm:euler-recursion}(2)]
If we consider the action of the simple reflection $s_n$ then we get
\[ys_n = (y' \alpha) s_n = y' \beta,\text{ and }ws_n = (w' \beta) s_n = w' \alpha < w.\]
If we apply Lemma~\ref{lemma:bf-single-recur} with $s=s_n$ then the desired conclusion follows.
\end{proof}

\begin{proof}[Proof of Theorem~\ref{thm:euler-recursion}(3)]
Consider the action of the simple reflection $s_i$, where $i=|y'|+1$. We have
\[ys_i = (y' \alpha \beta y'') s_i = y' \beta \alpha y'',\text{ and }ws_i = (w' \sigma \tau w'') s_i = w' \tau \sigma w'' \leq w,\]
where the last inequality follows from the fact that $\tau \sigma \leq \sigma \tau$, which holds since $\sigma\tau\neq\alpha\beta$. We may therefore conclude again by applying Lemma~\ref{lemma:bf-single-recur}. 
\end{proof}

\begin{proof}[Proof of Theorem~\ref{thm:euler-recursion}(4)]
If we let \(s=s_i\), where \(i = |y'| + 1 < n\), then we have 
\[y = y' \alpha \beta y'' < y' \beta \alpha y'' = ys\text{ and }w = w' \alpha \beta w'' < w' \beta \alpha w'' = ws.\]
The conclusion the follows from Lemma~\ref{lemma:bf-double-recur}, noting that $r=0$ since $i<n$.
\end{proof}

\begin{proof}[Proof of Theorem~\ref{thm:euler-recursion}(5--6)]
We write \(w = w' \sigma \alpha\), where \(\sigma \in \{\alpha, \beta\}\) and let
\[c = \cpc_{s_n}(y,w) = |y|_{\alpha}-|w|_{\alpha}.\]
Using the fact that
\[y = y' \alpha \alpha < y' \alpha \beta = ys_n\text{ and }w = w' \sigma \alpha < w' \sigma \beta = ws_n,\]
we can apply Lemma~\ref{lemma:bf-double-recur} with $s=s_n$ to conclude that 
\begin{equation}\label{eq:eyw=1st-approx}
e_{y,w} = e_{y' \alpha \beta, w} + (-1)^c e_{y' \alpha, w' \sigma} \overset{(3)}{=}  e_{y' \beta \alpha, w} + (-1)^c e_{y' \alpha, w' \sigma}.
\end{equation}
Repeating the calculation above with $y$ replaced by $y'\beta\alpha$, and using the fact that
\[\cpc_{s_n}(y'\beta\alpha,w) = |y'\beta\alpha|_{\alpha}-|w|_{\alpha} = c-1,\]
we find that
\[e_{y' \beta \alpha, w} = e_{y' \beta \beta,w} + (-1)^{c-1} e_{y' \beta, w' \sigma},\]
which combined with \eqref{eq:eyw=1st-approx} yields
\begin{equation}\label{eq:eyw=2nd-approx}
e_{y,w} = e_{y' \beta \beta,w} + (-1)^c e_{y' \alpha, w' \sigma} + (-1)^{c-1} e_{y' \beta, w' \sigma}.
\end{equation}

To conclude, we analyze separately the two choices for $\sigma$. If $\sigma=\beta$ then the previously established recursion (2) yields
\[e_{y' \alpha, w' \sigma} = e_{y' \beta, w' \sigma},\]
so we get a cancellation in \eqref{eq:eyw=2nd-approx} that proves $e_{y,w}=e_{y'\beta\beta}$, which establishes recursion (5) in our theorem.

Suppose now that $\sigma=\alpha$, observe that $|y'\alpha|=|w'\alpha|=n-1$, and apply Lemma~\ref{lemma:bf-double-recur} to the action of the simple reflection $s_{n-1}$ (in the corresponding smaller Weyl group). We have
\[ \cpc_{s_{n-1}}(y'\alpha,w'\alpha) = c,\]
and therefore
\[e_{y' \alpha, w' \alpha} = e_{y'\beta, w' \alpha} + (-1)^c e_{y',w'}.\]
Multiplying the equality above by $(-1)^c$ and combining it with \eqref{eq:eyw=2nd-approx} for $\sigma=\alpha$, we obtain
\[e_{y,w} = e_{y' \beta \beta,w} + (-1)^{2c}e_{y',w'} + (-1)^c e_{y' \beta, w' \alpha} + (-1)^{c-1} e_{y' \beta, w' \alpha} = e_{y' \beta \beta,w} + e_{y',w'},\]
proving recursion (6).
\end{proof}

To finish the proof of Theorem~\ref{thm:euler-recursion}, we have to explain why the recursive relations (1--6) completely determine the local Euler obstructions, which we do next.

\begin{proof}[Conclusion of the proof of Theorem~\ref{thm:euler-recursion}]
We prove that the relations (1--6) determine $e_{y,w}$, by induction on the pair $(|y|,y)$ with the order given by
\[ (|y|,y)\leq(|\tilde{y}|,\tilde{y}) \Longleftrightarrow (|y|<|\tilde{y}|) \text{ or }(|y|=|\tilde{y}|\text{ and }y\geq\tilde{y}).\]
Based on relation (1), we may assume that $y<w$. If $y$ contains the subword $\alpha\beta$, then we can either apply relation (3) or (4) and induction to compute $e_{y,w}$, using the fact that
\[ y'\beta\alpha y''>y'\alpha\beta y''\text{ and }|y'y''|<|y|.\]
We can therefore assume that $y=\beta\cdots\beta\alpha\cdots\alpha$, and since $y<w$, $y$ must contain at least one $\alpha$. If $w$ ends in $\beta$ then we apply relation (2) and induction, using the fact that $y'\beta>y$.

We may therefore further assume that $w$ ends in $\alpha$, and since $y<w$, we get that the last two letters in $y$ are $\alpha\alpha$. We can then compute $e_{y,w}$ by applying either relation (5) or (6). Noting that $y'\beta\beta>y$ and $|y'|<|y|$, we can apply induction again to conclude our proof.
\end{proof}

\section{Trees and admissible labelings}\label{sec:trees}

The goal of this section is to give a combinatorial interpretation of the local Euler obstructions for $LG(n,2n)$ as the number of admissible labelings of some tree diagrams associated with a pair of words $(y,w)$ (Theorem~\ref{thm:count-trees}). The combinatorics that we employ is familiar in Kazhdan--Lusztig theory (see \cite{las-sch}, \cite{boe}), and it was used in \cite{boe-fu} to provide a description of the local Euler obstructions as a \emph{signed} count of diagram labelings. Our contribution is to find an appropriate modification of the combinatorial constructions in order to obtain a \emph{positive} count of diagrams that elucidates the non-negativity of the local Euler obstructions. In particular, this will allow us in Section~\ref{sec:vanishing} to completely characterize the pairs $(y,w)$ for which $e_{y,w}=0$.

We let $Z$ denote the center of the \emph{cycle monoid} of Lascoux and Sch\"utzenberger \cite{las-sch}*{Section~4}, which is the smallest set of words in $\alpha$ and $\beta$ satisfying the following properties (see also \cite{boe}*{(3.7)}):
\begin{itemize}
    \item The empty word $\emptyset$ is in $Z$.
    \item If $z\in Z$ then $\alpha z \beta$ is in $Z$.
    \item $Z$ is closed under concatenation (it is a submonoid of the free group on $\alpha,\beta$).
\end{itemize}
Every word $z\in Z$ can be encoded using a rooted tree $A(z)$ constructed recursively as follows:
\begin{itemize}
    \item For the empty word, $A(\emptyset) = \bullet$ consists of only the root and no edges.
    \item The tree $A(\alpha z\beta)$ is obtained by introducing a new root, and joining it to the root of $A(z)$ by an edge.
    \item If $z_1,z_2\in Z$ then $A(z_1z_2)$ is obtained by glueing $A(z_1)$ and $A(z_2)$ at their root. In this case we will always draw the edges coming from $A(z_1)$ on the left, and those coming from $A(z_2)$ on the right.
\end{itemize}

\begin{example}\label{ex:a-z-word}
 The word $z=\alpha\beta\alpha\alpha\beta\alpha\beta\beta$ corresponds to the tree
\[
A(z):\qquad
\begin{array}{c}
\begin{tikzpicture}[-,>=stealth',level/.style={sibling distance = 2cm/#1,level distance = 1cm}] 
\node [arn_n] {}
    child{ node [arn_n] {} 
    }
    child{ node [arn_n] {}
            child{ node [arn_n] {} 
            }
            child{ node [arn_n] {}
            }
		}
; 
\end{tikzpicture}
\end{array}
\]
If we consider a tubular neighborhood of $A(z)$, and travel around its boundary starting at the root, first moving along the leftmost edge, and always staying to the right in the direction of travel, then we can recover $z$ by writing the label $\alpha$ whenever we move downward along an edge, and writing $\beta$ when we move upward (see \cite{las-sch}*{Exemple~6.2}):
\[
\begin{tikzpicture}[-,>=stealth',level/.style={sibling distance = 5cm/#1,level distance = 1.5cm}]
\pgfsetlinewidth{15*\the\pgflinewidth}
\node [arn_n] {}
    child{ node [arn_n] {} 
    edge from parent node[above left] {$\alpha$}
    edge from parent node[below right] {$\beta$}
    }
    child{ node [arn_n] {} 
            child{ node [arn_n] {}
                edge from parent node[above left] {$\alpha$}
                edge from parent node[below right] {$\beta$}
            }
            child{ node [arn_n] {}
                edge from parent node[below left] {$\alpha$}
                edge from parent node[above right] {$\beta$}
            }
    edge from parent node[below left] {$\alpha$}
    edge from parent node[above right] {$\beta$}
    }
; 
\end{tikzpicture}
\]
One can see from the description above that words $z\in Z$ can be characterized by the condition that $\pth(z)$ starts at $(0,0)$ and ends at $(|z|,0)$, and has the property that no point on $\pth(z)$ lies above the $x$-axis.
\end{example}

We next consider $\tilde{Z}$ denote the set of words obtained by concatenating words in $Z$ and words consisting only of the letter $\beta$. A typical word $\tilde{z}\in \tilde{Z}$ has the form
\begin{equation}\label{eq:tilde-z}
\tilde{z} = z_s\beta\cdots\beta z_{s-1}\beta\cdots\beta z_{s-2} \cdots z_1 \beta\cdots\beta z_0,
\end{equation}
where some $z_i$ may be empty, as well as some of the sequences of $\beta$'s. We associate to $\tilde{z}$ the rooted tree
\[ A(\tilde{z}) := A(z_s z_{s-1}\cdots z_1 z_0).\]
Unlike for words in $Z$, it is no longer possible to recover $\tilde{z}$ from its tree. The reader can check that
\[\tilde{z} = \beta \alpha\beta \beta\beta \alpha\alpha\beta\alpha\beta\beta \beta\]
has the property that $A(\tilde{z}) = A(z)$, where $z$ is as in Example~\ref{ex:a-z-word}.

If $w$ is now an arbitrary word in $\alpha$ and $\beta$, then it can be expressed uniquely as
\begin{equation}\label{eq:w-standard-form}
     w = z_r \alpha z_{r-1} \alpha \cdots \alpha z_1 \alpha z_0,\text{ with }z_i\in Z\text{ for }i=0,\cdots,r-1,\text{ and }z_r\in\tilde{Z},
\end{equation}
where some of the $z_i$ may be empty. To any such word $w$ we associate a rooted tree $A(w)$, with 
\begin{itemize}
    \item \defi{Distinguished nodes} $V_0,\cdots,V_r$, where $V_r$ is the root of the tree $A(w)$.
    \item \defi{Distinguished edges} $E_1,\cdots,E_r$, where $E_i$ joins $V_{i-1}$ to $V_i$, and corresponds to the letter $\alpha$ in the expression \eqref{eq:w-standard-form} which is located immediately to the left of $z_{i-1}$.
    \item A subtree $A(z_i)$ attached at node $V_i$ (so that $V_i$ is the root of $A(z_i)$) for each $i=1,\cdots,r$. We call the edges occurring in the subtrees $A(z_i)$ \defi{regular edges}, and the nodes different from $V_i$ \defi{regular nodes}.
\end{itemize}

\begin{example}\label{ex:a-w-word}
The word $w=\beta\beta\alpha\beta\alpha\alpha\alpha\beta\alpha\alpha\beta\alpha\beta\beta\alpha$ corresponds to the following tree, where we represented the distinguished edges $E_1,E_2,E_3$ using dashed lines, and we highlighted the nodes $V_0,\cdots,V_3$ where the trees corresponding to the subwords $z_i$ get attached (note that $z_0$ and $z_2$ are empty, while the word $z_1$ and the corresponding subtree $A(z_1)$ of $A(w)$ are as in Example~\ref{ex:a-z-word}).
\[
A(w):\qquad
\begin{array}{c}
\begin{tikzpicture}[-,>=stealth',level/.style={sibling distance = 5cm/#1,level distance = 1.5cm}] 
\node [arn_v] {$V_3$}
child{ node [arn_n] {}
}
child[grow=south east,emph]{ node [arn_v] {$V_2$}
  child[grow=south east,emph]{node [arn_v] {$V_1$} 
    child[grow=south west,norm]{ node [arn_n] {} 
    }
    child[grow=south,norm]{ node [arn_n] {}
            child{ node [arn_n] {} 
            }
            child{ node [arn_n] {}
            }
		}
    child[grow=south east,emph]{ node [arn_v] {$V_0$}
    edge from parent node[above right] {$E_1$}
    }
  edge from parent node[above right] {$E_2$}
  }
  edge from parent node[above right] {$E_3$}
}
; 
\end{tikzpicture}
\end{array}
\]
\end{example}

A \defi{leaf} of $A(w)$ is a node which is incident to a unique edge and it is different from the root of $A(w)$. The unique edge incident to a leaf is called a \defi{terminal edge}. In Example~\ref{ex:a-w-word} there are five terminal edges: four of them are regular edges, and one is distinguished (namely~$E_1$). In general $E_1$ will be a terminal edge if and only if the word $z_0$ in \eqref{eq:w-standard-form} is empty. The distinguished edges $E_i$ with $i\geq 2$ will never be terminal. It will be important to notice that the regular terminal edges in $A(w)$ are in bijection with subwords $\alpha\beta$ of $w$ (or equivalently, with \defi{troughs} of $\pth(w)$).

There is a natural partial order on the edges of $A(w)$, where $F\leq F'$ if $F$ is contained in the unique path joining $F'$ to the root of $A(w)$. We have for instance that the distinguished edges are linearly ordered ($E_r\leq\cdots\leq E_1$), and that terminal edges are pairwise incomparable. In Example~\ref{ex:a-w-word}, we have $E_2\leq F$ for every $F$ in the subtree $A(z_1)$ with root $V_1$, but $E_1$ is incomparable to every such $F$.

We next consider a pair $(y,w)$ of words of the same size. We define a \defi{capacity} (relative to $y$) for every terminal edge of $A(w)$ as follows:
\begin{itemize}
    \item If $F$ is a regular terminal edge, corresponding to a trough $\alpha\beta$, then we write $w=w'\alpha\beta w''$ and $y=y'\sigma\tau y''$ with $|y'|=|w'|$ and $\sigma,\tau\in\{\alpha,\beta\}$, and let (see also \cite{boe}*{Example~3.9})
    \begin{equation}\label{eq:def-cap-regular}
    \cpc(F) = \cpc_y(F) := |y'\sigma|_{\alpha} - |w'\alpha|_{\alpha}.
    \end{equation}
    \item If $E_1$ is a terminal edge then we let
    \begin{equation}\label{eq:def-cap-distinguished}
    \cpc(E_1) = \cpc_y(E_1) := |y|_{\alpha}-|w|_{\alpha}.
    \end{equation}
\end{itemize}
We let $A(w/y)$ denote the tree $A(w)$ together with the additional data of the capacities of its terminal edges. We will refer to $A(w/y)$ as a \defi{diagram} or \defi{decorated tree}. 

When picturing $A(w/y)$ we will often omit the labels for nodes and edges unless we need to specifically refer to them, and we will indicate the capacities of the terminal edges by placing a number representing the capacity at the leaf incident to each terminal edge. We now encourage the reader to revisit Example~\ref{ex:A-w-y} in the Introduction, where the word $w$ is the same as the one in Example~\ref{ex:a-w-word}. Before stating the main result of the section, we ask the reader to recall Definition~\ref{def:admissible-label}, and we illustrate it with a count of admissible labelings in an example.

\begin{example}\label{ex:124-labelings}
 We claim that the number of admissible labelings of the diagram $A(w/y)$ in Example~\ref{ex:A-w-y} is $124$. Note that if $F_0$ is the unique regular edge incident to the root then $\ell(F_0)\in\{0,1\}$, and either choice is compatible with the labelings of the remaining edges. It is then enough to check that the remaining part of the diagram has $62$ admissible labelings. We have $\ell(E_1)=4$ by condition (3) in Definition~\ref{def:admissible-label}, $\ell(E_2)\leq 4$ is even by (1) and (5), and $\ell(E_3)=\ell(E_2)$ by (4). We therefore need to count admissible labelings of the decorated subtree $A(z_1)$ below, where the labels are greater than or equal to $\ell(E_2)$.
 \[
 \begin{array}{c}
\begin{tikzpicture}[-,>=stealth',level/.style={sibling distance = 2cm/#1,level distance = 1cm}] 
\node [arn_n] {}
    child{ node [arn_n] {2} 
    edge from parent node[above left] {$F_1$}
    }
    child{ node [arn_n] {}
            child{ node [arn_n] {2} 
            edge from parent node[left] {$F_3$}
            }
            child{ node [arn_n] {3}
            edge from parent node[right] {$F_4$}
            }
    edge from parent node[above right] {$F_2$}
		}
; 
\end{tikzpicture}
\end{array}
\]
It is clear that if $\ell(E_2)=4$ then no such labelings exist. If $\ell(E_2)=2$ then there are exactly two labelings:
 \[
 \begin{array}{c}
\begin{tikzpicture}[-,>=stealth',level/.style={sibling distance = 2cm/#1,level distance = 1cm}] 
\node [arn_n] {}
    child{ node [arn_n] {2} 
    edge from parent node[above left] {$2$}
    }
    child{ node [arn_n] {}
            child{ node [arn_n] {2} 
            edge from parent node[left] {$2$}
            }
            child{ node [arn_n] {3}
            edge from parent node[right] {$2$}
            }
    edge from parent node[above right] {$2$}
		}
; 
\end{tikzpicture}
\end{array}
\qquad\qquad\qquad\text{and}\qquad\qquad\qquad
 \begin{array}{c}
\begin{tikzpicture}[-,>=stealth',level/.style={sibling distance = 2cm/#1,level distance = 1cm}] 
\node [arn_n] {}
    child{ node [arn_n] {2} 
    edge from parent node[above left] {$2$}
    }
    child{ node [arn_n] {}
            child{ node [arn_n] {2} 
            edge from parent node[left] {$2$}
            }
            child{ node [arn_n] {3}
            edge from parent node[right] {$3$}
            }
    edge from parent node[above right] {$2$}
		}
; 
\end{tikzpicture}
\end{array}
\]
If $\ell(E_2)=0$ then we have $\ell(F_1)\in\{0,1,2\}$ and each choice is independent of the rest of the labelings, so we need to show that the labels for $F_2,F_3,F_4$ can be chosen in $20$ ways. Indeed, if $\ell(F_2)=i$ then there are $3-i$ choices for $\ell(F_3)$ and $4-i$ choices for $\ell(F_4)$, for a total of
\[ \sum_{i=0}^2 (3-i)\cdot(4-i) = 12+6+2=20\text{ labelings}.\]
\end{example}

We next move closer toward the goal of this section, which is the proof of Theorem~\ref{thm:count-trees}, for which we establish several preliminary results.

\begin{lemma}\label{lem:rec-2}
 Suppose that $y = y' \alpha \beta y''$ and $w = w' \alpha \beta w''$, where $|y'|=|w'|$, and let $F$ denote the terminal edge of $A(w)$ corresponding to the trough $\alpha\beta$ between $w'$ and $w''$. We have a bijection between admissible labelings $\ell$ of $A(w/y)$ with $\ell(F)=\cpc(F)$ and admissible labelings of $A(w'w''/y'y'')$.
\end{lemma}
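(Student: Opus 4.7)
The plan is to define the bijection by restriction and extension, with the capacity $\cpc(F)$ serving as the bridge, and then verify that admissibility translates correctly through a case analysis on the local structure of $F$ in $A(w)$.

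First I would establish the underlying tree identification: $A(w'w'')$ is obtained from $A(w)$ by deleting $F$ and the leaf incident to it. Writing $w$ in standard form $w = z_r \alpha z_{r-1}\alpha\cdots\alpha z_0$, the trough at positions $|w'|+1, |w'|+2$ lies inside exactly one of the pieces $z_j$ (not at a distinguished $\alpha$, since an $\alpha$ separator is followed by another piece of $Z$ or by $\emptyset$, never by a $\beta$). Removing $\alpha\beta$ from inside $z_j$ keeps it in $Z$ (or keeps $z_r$ in $\tilde Z$), so the distinguished edges of $A(w)$ and $A(w'w'')$ correspond canonically. Two structural cases arise, based on the parent node $V$ of $F$'s leaf: (a) $V$ has other children beyond the one attached via $F$, so no new terminal edges appear in $A(w'w'')$; (b) $V$ had $F$ as its only downward edge, so $V$ becomes a leaf of $A(w'w'')$, and the former parent edge of $V$ (possibly $E_1$, if $V = V_0$) becomes a new terminal edge.

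Next I would verify capacity matching via direct computation using the definitions \eqref{eq:def-cap-regular} and \eqref{eq:def-cap-distinguished}. For any terminal edge $F'$ of $A(w)$ other than $F$, the positions in $w'w''$ and $y'y''$ lying before $F'$ agree with those in $w$ and $y$ after the uniform shift cancelling the removed $\alpha\beta$ block in both words, so $\cpc(F')$ is unchanged. In case (b), a short computation yields that the new terminal edge (i.e.~$V$'s former parent edge, specializing to $E_1$ when $V = V_0$) has capacity exactly equal to $\cpc(F)$. With this in hand, the map $\ell \mapsto \ell|_{\text{edges}\neq F}$ gives a well-defined function from admissible labelings of $A(w/y)$ with $\ell(F) = \cpc(F)$ to labelings of $A(w'w''/y'y'')$, and its inverse is the extension $\hat\ell \mapsto \ell$ with $\ell(F) = \cpc(F)$.

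The remaining task is to check all five admissibility conditions on both sides, and I expect the delicate point to be monotonicity in the reverse direction, namely that $\hat\ell(G) \leq \cpc(F)$ for every edge $G < F$ in $A(w)$. In case (b) this is immediate: the new capacity of $V$'s former parent edge is $\cpc(F)$, and monotonicity of $\hat\ell$ propagates this bound to all ancestors. In case (a) this bound must come from a different descendant of $G$. The key geometric input is that if $F$ has a sibling subtree (rooted at a child of $V$ corresponding to a block $\alpha z_1 \beta$ in $w$), then the leftmost terminal edge $F''$ of that subtree satisfies $\cpc(F'') \leq \cpc(F)$; a direct count of $\alpha$'s shows $\cpc(F'') - \cpc(F) = m + [y_{p}=\alpha] - k - 1$ where $k$ is the number of initial $\alpha$'s of $z_1$ and $m \leq k$ is the number of $\alpha$'s among the corresponding letters of $y$, forcing non-positivity. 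Then monotonicity of $\hat\ell$ gives $\hat\ell(G) \leq \hat\ell(F'') \leq \cpc(F'') \leq \cpc(F)$. The remaining conditions --- capacity bounds on other terminal edges, the min-condition \eqref{eq:def-cap-regular}-analogue for odd distinguished edges (which requires showing $F$ is never the unique minimizer in case (a), using the same sibling argument), and the parity condition --- follow from the preservation of the distinguished edge structure and the fact that setting $\ell(F) = \cpc(F)$ keeps $F$'s value consistent with the new terminal edge's forced label in case (b). The hardest step is therefore the sibling-capacity inequality, which requires careful bookkeeping of $\alpha$'s in the subword preceding the leftmost sibling trough.
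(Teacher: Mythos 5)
Your overall strategy is the same as the paper's: delete $F$, match capacities, and reduce everything to showing that any admissible labeling of the smaller diagram assigns to the ancestors of $F$ a value at most $\cpc(F)$, by exhibiting a terminal edge of capacity $\leq\cpc(F)$ lying above the parent edge of $F$. Your case (b) (where $V$ becomes a leaf and the former parent edge inherits capacity exactly $\cpc(F)$) agrees with the paper and is correct. The gap is in case (a). There you obtain the needed witness only from ``a sibling subtree rooted at a child of $V$ corresponding to a block $\alpha z_1\beta$ in $w$'', but such a regular sibling need not exist even when $V$ does not become a leaf: if $F$ is attached at a distinguished node $V=V_j$ with $j\geq 1$ and $z_j=\alpha\beta$ (e.g.\ $w=\alpha\beta\,\alpha\,\alpha\beta\,\alpha$ with $F$ the second trough), the only other edges at $V$ are the distinguished edges $E_j$ and $E_{j+1}$, and your key lemma produces nothing. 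In that configuration the witness terminal edge lies strictly below $E_j$ --- it is the trough at the first $\beta$ of $w''$, which sits inside $A(z_{j-1})$ or deeper, or, if $w''$ contains no $\beta$ at all, it is the distinguished terminal edge $E_1$ with $\cpc(E_1)=|y|_\alpha-|w|_\alpha\leq\cpc(F)$. These are exactly the sub-cases the paper treats (last $\alpha$ of $w'$ when $w'$ ends in $\beta$; first $\beta$ of $w''$ when $w''$ starts with $\alpha$; $T=E_1$ otherwise), and your proposal omits them; the same omission affects your claim that ``$F$ is never the unique minimizer'' for condition (4).

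A secondary issue: even when regular siblings exist, your computation (the identity $\cpc(F'')-\cpc(F)=m+[y_p=\alpha]-k-1$) is tailored to a sibling immediately to the \emph{right} of $F$'s trough. If all regular siblings lie to the left of $F$, the ``leftmost terminal edge'' of such a sibling is the wrong witness --- the inequality $\cpc(F'')\leq\cpc(F)$ is not justified by your count, since between that trough and $F$ the word $w$ may pick up many more $\alpha$'s than $y$ does; the correct witness there is the trough at the \emph{last} $\alpha$ preceding $F$ (only $\beta$'s separate it from $F$'s $\alpha$, and $y$ has an $\alpha$ at $F$'s position, so its capacity is $\leq\cpc(F)$). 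So the skeleton of your argument is right, but the case analysis identifying the low-capacity terminal edge above $F'$ is incomplete and needs the three-way analysis of $w'$ and $w''$ rather than a sibling-at-$V$ argument.
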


\begin{proof}
 We have that $A(w'w'')$ is obtained from $A(w)$ by removing the terminal edge $F$, so we have a natural restriction map from labelings $\ell$ of $A(w)$ to labelings $\ell'$ of $A(w'w'')$. Moreover, every labeling $\ell'$ of $A(w'w'')$ extends uniquely to a labeling $\ell$ of $A(w)$ with $\ell(F)=\cpc(F)$. We will show that this correspondence establishes the desired bijection for admissible labelings. 
 
 We let $V$ denote the node incident to $F$ which is not a leaf of $A(w)$. If $V$ is the root of $A(w)$ then $F$ is incomparable to every other edge $G$ of $A(w)$, hence its label is only subject to condition (2) in Definition~\ref{def:admissible-label}, and we get the desired bijection with labelings of $A(w'w'')$. We therefore assume that $V$ is not the root of $A(w)$, and let $F'$ denote the unique edge incident to $V$ with $F'<F$:
 \[
 \begin{tikzpicture}[-,>=stealth',level/.style={sibling distance = 2cm/#1,level distance = 1cm}] 
\node [arn_n] {}
    child[grow=south east,norm]{ node [arn_n,label=right:$V$] {}
            child[grow=south west,norm]{ node [arn_n] {$c$} 
            edge from parent node[below right] {$F$}
            }
        edge from parent node[above right] {$F'$}
		}
; 
\end{tikzpicture}
\]
We note that the existence of $F'$ implies that $w'$ contains at least one $\alpha$. We also note that every terminal edge $T\neq F$ of $A(w)$ is also a terminal edge in $A(w'w'')$, and it satisfies $\cpc_y(T)=\cpc_{y'y''}(T)$, making the notation $\cpc(T)$ unambiguous. We write $c=\cpc_y(F)$.
 
If $V$ is a leaf of $A(w'w'')$ then we have $\cpc_{y'y''}(F')=c$, hence every admissible labeling $\ell'$ of $A(w'w'')$ satisfies $\ell'(F')\leq c$ by Definition~\ref{def:admissible-label}(2). This is enough to conclude that the correspondence between labelings $\ell$ of $A(w/y)$ with $\ell(F)=c$ and labelings of $A(w'w''/y'y'')$ remains a bijection when restricting to admissible labelings.

If $V$ is not a leaf of $A(w'w'')$ then we will prove that there exists a terminal edge $T$ with $F'\leq T$ and $\cpc(T)\leq c$. It then follows from Definition~\ref{def:admissible-label}(1) that every admissible labeling $\ell'$ of $A(w'w'')$ satisfies $\ell'(F')\leq \ell'(T) \leq c$, and we conclude as in the previous paragraph. To find the terminal edge $T$, we note that the assumption that $V$ is not a leaf of $A(w'w'')$ implies that either $w'$ ends with $\beta$ or that $w''$ starts with $\alpha$. If $w'$ ends with $\beta$, consider the last $\alpha$ in $w'$, which is necessarily followed by a $\beta$, hence it determines a trough. If we let $T$ be the corresponding terminal edge then $\cpc(T)\leq\cpc_y(F)=c$, as desired. If $w''$ starts with $\alpha$ and contains at least one $\beta$ we consider the first such, which is necessarily preceded by $\alpha$. As in the previous case, this determines a terminal edge $T$ with $\cpc(T)\leq c$, as desired. Finally, if $w''$ contains no $\beta$ then we take $T=E_1$ to be the distinguished terminal edge, concluding the proof.
\end{proof}

\begin{lemma}\label{lem:rec-3-2}
 Suppose that $y=y'\alpha\alpha$ and $w=w'\beta\alpha$, so that $E_1$ is a distinguished terminal edge in $A(w)$. If we let $c=\cpc(E_1)$ and if $A(w)$ has at least two distinguished edges, then every admissible labeling of $A(w/y)$ satisfies $\ell(E_2)\leq c-2$.
\end{lemma}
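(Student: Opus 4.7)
The plan is to exhibit a regular terminal edge $T$ of the subtree $A(z_1)$ (hence of $A(w)$, automatically with $E_2\leq T$) satisfying $\cpc(T)\leq c-1$. Once this is in place, Definition~\ref{def:admissible-label}(1) and (2) give $\ell(E_2)\leq \ell(T)\leq \cpc(T)\leq c-1$ for every admissible labeling. Because $E_1$ is terminal, Definition~\ref{def:admissible-label}(3) forces $\ell(E_1)=c$, and condition (5) with $i=1$ forces $\ell(E_2)\equiv c\pmod{2}$. The bound $\ell(E_2)\leq c-1$ together with this parity then sharpens to $\ell(E_2)\leq c-2$, as desired.

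To produce $T$, I would work with the standard form \eqref{eq:w-standard-form} of $w$. Since $w$ ends in $\alpha$ we have $z_0=\emptyset$, and since $w$ ends in $\beta\alpha$ and $E_2$ exists, the word $z_1\in Z$ is non-empty and ends in $\beta$. Thus $w=u\cdot z_1\cdot\alpha$, where $u$ terminates with $E_2$ and the final $\alpha$ is $E_1$. Factor $z_1=z_1^{(1)}\alpha\beta^k$ for some $k\geq 1$, where $z_1^{(1)}$ is the prefix of $z_1$ preceding its last $\alpha$, and take $T$ to be the regular terminal edge of $A(z_1)$ corresponding to this last trough, at position $i^*=|u|+|z_1^{(1)}|+1$ of $w$. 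Setting $i_0=|u|$, the balance condition $|z_1|_\alpha=|z_1|_\beta$ yields the height computations $\pth(w)(i^*)=\pth(w)(i_0)-k$ and $\pth(w)(n)=\pth(w)(i_0)-1$.

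The heart of the argument is the bound $\cpc(T)\leq c-1$. Using the geometric interpretation $\cpc(F)=(\pth(w)(i)-\pth(y)(i))/2$ at the trough position $i$, the above heights give
\[
\cpc(T)-c \;=\; \frac{(1-k)+\bigl(\pth(y)(n)-\pth(y)(i^*)\bigr)}{2},
\]
so it suffices to show $\pth(y)(n)-\pth(y)(i^*)\leq k-3$. The $n-i^*=k+1$ letters of $y$ in positions $i^*+1,\ldots,n$ include $y_{n-1}=y_n=\alpha$ (contributing $-2$ to the change in $\pth(y)$), while the remaining $k-1$ letters contribute at most $k-1$, which yields the required bound.

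The main obstacle I anticipate is identifying the correct terminal edge $T$: arbitrary troughs of $z_1$ need not have capacity $\leq c-1$, and the last trough is distinguished precisely because the length relation $n-i^*=k+1$ ensures that the guaranteed contribution $-2$ from $y_{n-1}y_n=\alpha\alpha$ dominates the worst-case upward fluctuation of $\pth(y)$ across the $k-1$ intermediate letters.
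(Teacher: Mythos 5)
Your argument is correct and follows essentially the same route as the paper: both identify the terminal edge coming from the last trough of $w$, which lies in $A(z_1)$ and hence dominates $E_2$ in the partial order, bound $\ell(E_2)$ by its capacity, and then use conditions (3) and (5) of Definition~\ref{def:admissible-label} to upgrade $\ell(E_2)\leq c-1$ to $\ell(E_2)\leq c-2$ by parity. The only difference is that the paper simply asserts the key inequality $\cpc(F)<\cpc(E_1)$, whereas you verify it explicitly via the path-height computation using $y=y'\alpha\alpha$; that verification is accurate.
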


\begin{proof} We use the notation \eqref{eq:w-standard-form}, and observe that our assumption that $A(w)$ has at least two distinguished edges, together with the fact that $w$ ends in $\beta\alpha$, implies that $z_1\neq\emptyset$. If we let $\alpha\beta$ be the last trough of $w$ then the corresponding terminal edge $F$ lies in the subtree $A(z_1)$ and moreover we have
\[ \cpc(F) < \cpc(E_1).\]
Since $F\in A(z_1)$ we have $E_2\leq F$, and parts (1) and (3) of Definition~\ref{def:admissible-label} imply that for every admissible labeling of $A(w/y)$ we have 
\[\ell(E_2)\leq\ell(F)<\ell(E_1)=c.\]
Using Definition~\ref{def:admissible-label}(5) with $i=1$, we conclude that $\ell(E_2)\leq c-2$, as desired.
\end{proof}

\begin{lemma}\label{lem:rec-3-1}
 Suppose that $y=y'\alpha\alpha$ and $w=w'\alpha\alpha$, so that $E_1$ is a distinguished terminal edge in $A(w)$. If we let $c=\cpc(E_1)$ then we have a bijection between admissible labelings of $A(w/y)$ with $\ell(E_2)=c$ and admissible labelings of $A(w'/y')$.
\end{lemma}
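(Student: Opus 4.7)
The plan is to show that the bijection is given explicitly by restriction and extension. Since $E_1$ is terminal, condition~(3) of Definition~\ref{def:admissible-label} forces $\ell(E_1) = c$, so given admissible $\ell$ on $A(w/y)$ with $\ell(E_2) = c$ I would restrict to $\ell' := \ell|_{A(w')}$ by dropping the values at $E_1, E_2$, and conversely I would extend any admissible $\ell'$ on $A(w'/y')$ by setting $\ell(E_1) = \ell(E_2) = c$. Since $w$ ends in $\alpha\alpha$ and any nonempty $z \in Z$ ends in $\beta$, the standard form \eqref{eq:w-standard-form} of $w$ has $z_0(w) = z_1(w) = \emptyset$, so $A(w')$ identifies naturally with $A(w)$ after removing the edges $E_1, E_2$ and the vertices $V_0, V_1$, under the matchings $V_0(w') = V_2(w)$, $V_1(w') = V_3(w)$, and $E_i(w') = E_{i+2}(w)$. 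These two maps are manifestly mutually inverse, so the content lies in verifying admissibility in both directions.

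Conditions (1), (2), and (5) of Definition~\ref{def:admissible-label} transfer transparently between the two diagrams (the $i=1$ instance of (5) on $A(w/y)$ reads $c \equiv c$). A direct computation gives $\cpc(E_1(w')) = |y|_{\alpha} - |w|_{\alpha} = c$, so condition~(3) for $\ell'$ --- relevant only when $z_2(w) = \emptyset$ and $E_1(w') = E_3(w)$ is terminal --- reduces to the equality $\ell(E_3(w)) = c$; this in turn follows from condition~(4) for $E_3(w)$ in $A(w)$, since in that case $z_0 = z_1 = z_2 = \emptyset$ leaves only $E_1$ and $E_2$ above $E_3(w)$, both labeled $c$. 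The substantive issue is condition~(4) applied in $A(w'/y')$ to $E_{2i-1}(w') = E_{2i+1}(w)$ for $i \geq 1$: the minima over $\{F > E_{2i+1}(w)\}$ computed in $A(w)$ and in $A(w')$ differ only by the inclusion of $\ell(E_1) = \ell(E_2) = c$, so they coincide exactly when the $A(w')$-minimum is itself at most $c$. Establishing this bound is the main obstacle.

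The plan to overcome it --- which will also furnish the monotonicity $\ell(E_3(w)) \leq c$ needed for the extension direction when $z_2(w) \neq \emptyset$ --- is to exhibit a regular terminal edge $F_{\textrm{last}} \in A(z_2)$ with $\cpc(F_{\textrm{last}}) \leq c$: then conditions (1) and (2) force $\ell(F_{\textrm{last}}) \leq c$, and $F_{\textrm{last}}$ lies above $E_{2i+1}(w)$ for every $i \geq 1$. I would take $F_{\textrm{last}}$ to correspond to the rightmost trough of $z_2$, at positions $(j_0, j_0+1)$ of $w$. Since $z_2 \in Z$ is nonempty it ends in $\beta$, and maximality of $j_0$ forces the segment $w_{j_0+1}\cdots w_{n-2}$ to consist entirely of $\beta$'s: otherwise the smallest $\alpha$-position $k$ in this segment, together with the first subsequent $\beta$ at some position $k'$, would give $w_{k'-1} = \alpha$ and produce a trough $(k'-1, k')$ strictly later than $j_0$. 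Writing $g(k) := |y_{\leq k}|_{\alpha} - |w_{\leq k}|_{\alpha}$ one has $\cpc(F_{\textrm{last}}) = g(j_0)$ and $c = g(n)$, and since $y, w$ both end in $\alpha\alpha$ also $g(n) = g(n-2)$, so
\[
c - \cpc(F_{\textrm{last}}) \;=\; g(n-2) - g(j_0) \;=\; |y_{(j_0, n-2]}|_{\alpha} - |w_{(j_0, n-2]}|_{\alpha} \;=\; |y_{(j_0, n-2]}|_{\alpha} \;\geq\; 0,
\]
which establishes the key inequality. Once this is in hand, the remaining verifications for both directions of the bijection reduce to routine comparisons of the admissibility conditions on $A(w/y)$ and $A(w'/y')$.
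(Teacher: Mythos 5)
Your plan matches the paper's proof: both set up the bijection by restriction/extension after noting $z_0=z_1=\emptyset$ and $\ell(E_1)=c$, split into the cases $z_2=\emptyset$ (where $E_3$ becomes the terminal first distinguished edge of $A(w')$ with capacity $c$) and $z_2\neq\emptyset$ (where the terminal edge of $A(z_2)$ at the last trough of $w$, having capacity at most $c$, controls $\ell(E_3)$ and keeps the relevant minima unchanged). Your explicit verification of that capacity bound via the all-$\beta$ segment after the last trough fills in a step the paper merely asserts, so the proposal is correct and essentially the same argument.
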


\begin{proof}
 Note that our hypotheses imply that with the notation \eqref{eq:w-standard-form} we have $r\geq 2$ and $z_0=z_1=\emptyset$, and that for every admissible labeling $\ell$ of $A(w/y)$ we have $\ell(E_1)=c$. If $A(w)$ has only two distinguished edges, and $\ell(E_2)=c$, then conditions (3), (4), (5) in Definition~\ref{def:admissible-label} are satisfied. Using the fact that $A(w'/y')$ is obtained from $A(w/y)$ by removing $E_1$ and $E_2$, it is clear that restricting an admissible labeling $\ell$ of $A(w/y)$ to $A(w'/y')$ provides the desired bijection.
 
 Suppose from now on that $A(w)$ has at least three distinguished edges. If $z_2=\emptyset$, or equivalently $E_3$ is a terminal edge of $A(w')$, then for every admissible labeling $\ell$ of $A(w/y)$ we have $\ell(E_3)=\ell(E_2)$. Moreover, we have $\cpc_{y'}(E_3)=c$ and therefore every admissible labeling $\ell'$ of $A(w'/y')$ satisfies $\ell'(E_3)=c$. By restricting labelings of $A(w/y)$ with $\ell(E_2)=c$ to $A(w'/y')$ we get the desired bijection.
 
 Finally, suppose that $z_2\neq\emptyset$ and let $F$ denote the terminal edge in $A(z_2)$ corresponding to the last trough of $A(w)$. We have $\cpc(F)\leq c$ and since $E_3\leq F$, every admissible labeling $\ell$ of $A(w/y)$ satisfies $\ell(E_3)\leq c$. Since $E_3$ is an odd distinguished edge in $A(w')$ but not terminal, it follows that by restricting labelings of $A(w/y)$ with $\ell(E_2)=c$ to $A(w'/y')$ we get the desired bijection.
\end{proof}

\begin{proof}[Proof of Theorem~\ref{thm:count-trees}]
 We denote by $\ell_{y,w}$ the number of admissible labelings of $A(w/y)$ and prove that they satisfy the recursions of $e_{y,w}$ from Section~\ref{sec:recursion-eyw}. We divide our analysis into several cases.
 
\noindent{\bf Case 1:} $y=w$. For every terminal edge $F$ we have $\cpc(F)=0$, hence Definition~\ref{def:admissible-label}(1) and (2) implies that $\ell(G)=0$ for every edge $G$. This is an admissible labeling, and we get $\ell_{y,w}=1$.

\noindent{\bf Case 2:} $y\not\leq w$. We will show that $\ell_{y,w}=0$. We have that some parts of $\pth(y)$ lie above $\pth(w)$, and without loss of generality, we may assume that either
\begin{itemize}
    \item $w$ contains a trough $\alpha\beta$ that lies below $\pth(y)$, in which case the corresponding regular terminal edge $F$ has $\cpc(F)<0$, and no admissible labelings exist by Definition~\ref{def:admissible-label}(2); or
    \item $E_1$ is a terminal edge (that is, $w$ ends in the letter $\alpha$) and $\cpc(E_1)<0$, in which case no admissible labelings exist by Definition~\ref{def:admissible-label}(3).
\end{itemize}

For the remaining cases we will assume that $y<w$. 

\noindent{\bf Case 3:} $y=\beta\cdots\beta\alpha$. The condition $w>y$ means that $w=\beta\cdots\beta$, thus $A(w/y)$ has no edges and $\ell_{y,w}=1$. 

\noindent{\bf Case 4:} We next show that if $y=y'\alpha$ and $w=w'\beta$ then
\begin{equation}\label{eq:lyw-rec2}
\ell_{y,w} = \ell_{y'\beta,w}.
\end{equation}
Note that since $w$ ends in $\beta$, $E_1$ is not a terminal edge. For a regular terminal edge $F$, the definition \eqref{eq:def-cap-regular} of $\cpc(F)$ does not involve the last letter in $y$ and $w$, hence $A(w/y)=A(w/y'\beta)$, which proves \eqref{eq:lyw-rec2}.

\noindent{\bf Case 5:} Suppose next that $y = y' \alpha \beta y''$ and $w = w' \sigma \tau w''$, where $|y'|=|w'|$ and $\sigma \tau \in \{\alpha \alpha, \beta \alpha, \beta \beta\}$. We show that
\begin{equation}\label{eq:lyw-rec3}
 \ell_{y,w} = \ell_{y' \beta \alpha y'',w}.
\end{equation}
It follows from \eqref{eq:def-cap-distinguished} that if $E_1$ is terminal then $\cpc(E_1)$ is not affected by permutations of the letters in $y$. Moreover, since $\sigma\tau$ is not a trough of $w$, the capacities of regular edges relative to $y$ and $y'\beta\alpha y''$ coincide. We conclude that $A(w/y)=A(w/y' \beta \alpha y'')$, which proves \eqref{eq:lyw-rec3}.

\noindent{\bf Case 6:} Suppose now that $y = y' \alpha \beta y''$ and $w = w' \alpha \beta w''$, where $|y'|=|w'|$. We show that
\begin{equation}\label{eq:lyw-rec4}
    \ell_{y,w} = \ell_{y' \beta \alpha y'' ,w} + \ell_{y' y'', w' w''}.
\end{equation}
We let $F$ denote the regular terminal edge corresponding to the trough $\alpha\beta$ between $w'$ and $w''$. If we write $\cpc_y(G)$ (resp. $\cpc_{\tilde{y}}(G)$) for the capacity of a terminal edge $G$ relative to $y$ (resp. $\tilde{y} = y' \beta \alpha y''$) then $\cpc_{\tilde{y}}(F)=\cpc_y(F)-1$ and $\cpc_{\tilde{y}}(G)=\cpc_y(G)$ for all $G\neq F$. It follows that $\ell_{y,w} - \ell_{y' \beta \alpha y'' ,w}$ is non-negative, and it counts admissible labelings of $A(w/y)$ for which $\ell(F)=\cpc(F)$. Applying Lemma~\ref{lem:rec-2} we get \eqref{eq:lyw-rec4}.

\noindent{\bf Case 7:} $y=y'\alpha\alpha$ and $w=w'\beta\alpha$. We show that 
\[ \ell_{y,w} = \ell_{y'\beta\beta,w}.\]
Let $\tilde{y} = y'\beta\beta$, let $c=\cpc_y(E_1)$, and note that $\cpc_{\tilde{y}}(E_1)=c-2$, and $\cpc_y(F)=\cpc_{\tilde{y}}(F)$ for all terminal edges $F\neq E_1$. By Definition~\ref{def:admissible-label}(3), every admissible labeling $\ell$ of $A(w/y)$ has $\ell(E_1)=c$, while every admissible labeling $\tilde{\ell}$ of $A(w/\tilde{y})$ has $\tilde{\ell}(E_1)=c-2$. Since $c$ and $c-2$ have the same parity, we get a bijection between admissible labelings $\tilde{l}$ of $A(w/\tilde{y})$ and $\ell$ of $A(w/y)$ by letting $\tilde{\ell}(G)=\ell(G)$ for all $G\neq E_1$. Indeed, if $E_1$ is the only distinguished edge of $A(w)$ then it is clear that $\tilde{\ell}$ is admissible if and only if $\ell$ is, and if $A(w)$ has at least two distinguished edges then the same conclusion follows using Lemma~\ref{lem:rec-3-2}.

\noindent{\bf Case 8:} $y=y'\alpha\alpha$ and $w=w'\alpha\alpha$. We show that 
\begin{equation}\label{eq:rec-3-1} 
\ell_{y,w} = \ell_{y'\beta\beta,w} + \ell_{y',w'}.
\end{equation}
If we let $\tilde{y} = y'\beta\beta$ and $c=\cpc_y(E_1)$, then $\cpc_{\tilde{y}}(E_1)=c-2$, and $\cpc_y(F)=\cpc_{\tilde{y}}(F)$ for all terminal edges $F\neq E_1$. It follows that every admissible labeling $\tilde{\ell}$ of $A(w/\tilde{y})$ gives rise to an admissible labeling $\ell$ of $A(w/y)$ by letting $\ell(E_1) = c = \tilde{\ell}(E_1)+2$, and $\ell(G)=\tilde{\ell}(G)$ for $G\neq E_1$. Moreover, the difference $\ell_{y,w} - \ell_{y'\beta\beta,w}$ counts admissible labelings $\ell$ of $A(w/y)$ for which changing $\ell(E_1)$ to $c-2$ results in an inadmissible labeling of $A(w/\tilde{y})$, that is, for which $\ell(E_2)=c$. Applying Lemma~\ref{lem:rec-3-1}, we get \eqref{eq:rec-3-1}.
\end{proof}

\section{Vanishing of local Euler obstructions}\label{sec:vanishing}

The goal of this section is to characterize the pairs $(y,w)$ with $y\leq w$ for which the corresponding local Euler obstructions vanish. More precisely, we use Theorem~\ref{thm:count-trees} to prove the following. 

\begin{theorem}\label{thm:vanishing}
Consider words $y,w$ with $y\leq w$.
 \begin{enumerate}
     \item Suppose that $\pth(y)$ lies strictly below $\pth(w)$. If we let $w$ as in \eqref{eq:w-standard-form} then
     \begin{equation}\label{eq:equiv-eyw=0}
      e_{y,w} = 0 \Longleftrightarrow |y|_{\alpha} - |w|_{\alpha}\text{ is odd, }r\text{ is odd, and }z_{2i}=\emptyset\text{ for all }i.
      \end{equation}
     \item Write $y=y'y''$ and $w=w'w''$ such that $|y'|_{\alpha}=|w'|_{\alpha}$, and $\pth(y'')$ lies strictly below $\pth(w'')$. We have that
     \begin{equation}\label{eq:eyw=0-iff-dble-prime=0}
     e_{y,w} = 0 \Longleftrightarrow e_{y'',w''}=0.
     \end{equation}
 \end{enumerate}
\end{theorem}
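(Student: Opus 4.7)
The plan is to reduce both parts to Theorem~\ref{thm:count-trees} and argue directly with admissible labelings of $A(w/y)$.

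For part (1), I would first prove the forward implication by a parity-cascade argument. Assuming the three conditions hold and $\ell$ is an admissible labeling, condition (3) gives $\ell(E_1) = \cpc(E_1)$ odd, and the parity constraint (5) with $i=1$ then makes $\ell(E_2)$ odd. One argues inductively that $\ell(E_j)$ is odd for every $j$: for each odd $E_{2i+1}$ with $i \geq 1$, condition (4) expresses $\ell(E_{2i+1})$ as a minimum over edges lying above $E_{2i+1}$, and the vanishing of $z_{2i}$ ensures that the only minimal such edge meeting $V_{2i}$ is $E_{2i}$, hence $\ell(E_{2i+1}) = \ell(E_{2i})$ retains odd parity. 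Since $r$ is odd, the convention $\ell(E_{r+1}) = 0$ together with the parity pairing in~(5) forces $\ell(E_r)$ to be even, a contradiction.

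For the reverse direction of part (1), I would exhibit an admissible labeling in each failure case. (A) If $z_0 \neq \emptyset$, the identically zero labeling works. (B) If $E_1$ is terminal with $\cpc(E_1)$ even, take $\ell(E_1) = \cpc(E_1)$ and $\ell = 0$ elsewhere. (C) If $\cpc(E_1)$ is odd and $r$ is even, set $\ell(E_1) = \cpc(E_1)$ and $\ell \equiv 1$ on all other edges, using that strictly-below forces $\cpc(F) \geq 1$ on every regular terminal edge. (D) If $\cpc(E_1)$ is odd, $r$ is odd, and $z_{2j} \neq \emptyset$ for a smallest $j \geq 1$, set $\ell(E_1) = \cpc(E_1)$, $\ell(E_i) = 1$ for $2 \leq i \leq 2j$, $\ell(E_i) = 0$ for $i \geq 2j+1$, $\ell = 1$ on the edges of $A(z_k)$ for $k < 2j$, and $\ell = 0$ on the edges of $A(z_k)$ for $k \geq 2j$. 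In case (D) the minimum of condition~(4) at $E_{2j+1}$ is realized by edges of $A(z_{2j})$, breaking the parity cascade in precisely the way the forward argument shows to be impossible when all even-indexed $z$'s vanish.

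For part (2), the plan is to establish the multiplicative formula $e_{y,w} = e_{y',w'} \cdot e_{y'',w''}$ by induction on $(|y|,y)$ ordered as in the proof of Theorem~\ref{thm:euler-recursion}, and then combine it with $e_{y',w'} > 0$. The positivity of $e_{y',w'}$ is immediate from Theorem~\ref{thm:count-trees}: $|y'|_\alpha = |w'|_\alpha$ forces $\cpc(E_1) = 0$ in $A(w'/y')$ whenever $E_1$ is terminal, so the identically zero labeling is admissible. For the multiplicative formula, the key observation is that $\pth(y'')$ being strictly below $\pth(w'')$ forces $y''$ to begin with $\alpha$, since otherwise $y \leq w$ would fail just past position $|y'|$; hence no subword $\alpha\beta$ of $y$ straddles the splitting position. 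Each recursion of Theorem~\ref{thm:euler-recursion} can therefore be applied to $(y,w)$ in parallel with the corresponding recursion applied to $(y'',w'')$, propagating the factorization through the induction, with $|y''|\in\{0,1\}$ handled as base cases that reduce to trivial factorizations. Combined with $e_{y',w'}>0$, this yields~\eqref{eq:eyw=0-iff-dble-prime=0}.

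The main obstacle I foresee is case (D) of part~(1): the labeling must simultaneously propagate the odd value $\cpc(E_1)$ along $E_1,\ldots,E_{2j}$, transition to even values along $E_{2j+1},\ldots,E_r$, and respect the capacity bound of every regular terminal edge inside the nonempty subtree $A(z_{2j})$. The construction above is tailored so that setting the $A(z_{2j})$ edges to $0$ both respects capacities and supplies the drop in the minimum of condition~(4) that is needed to reverse the parity cascade.
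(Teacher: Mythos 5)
Your part (1) is essentially the paper's own argument and is correct: the four explicit labelings (the zero labeling when $z_0\neq\emptyset$; $\ell(E_1)=\cpc(E_1)$ and $0$ elsewhere when $\cpc(E_1)$ is even; $\ell(E_1)=\cpc(E_1)$ and $1$ elsewhere when $r$ is even; and your case (D) labeling) together with the parity cascade are exactly the paper's proof, and your choice in case (D) of setting the edges of $A(z_{2j})$ to $0$ is precisely what makes condition (4) of Definition~\ref{def:admissible-label} hold at $E_{2j+1}$.

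Part (2) is where there is a genuine gap. The paper proves \eqref{eq:eyw=0-iff-dble-prime=0} directly on labelings: the last $\alpha$ of $w'$ produces a regular terminal edge $F$ of $A(w)$ with $\cpc(F)=0$ lying above all of $E_{j+1},\dots,E_r$, so every admissible labeling of $A(w/y)$ vanishes there; one then checks that restriction to $A(w''/y'')$ and extension by zero preserve admissibility, the only delicate points being condition (4) at $E_{j+1}$ and condition (5) at the pair $(E_j,E_{j+1})$ when $j$ is odd. You instead aim for the stronger identity $e_{y,w}=e_{y',w'}\,e_{y'',w''}$ via a ``parallel'' induction on the recursions of Theorem~\ref{thm:euler-recursion}. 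That identity is plausibly true (compare the remark following Example~\ref{ex:w-y-intersect}, which exhibits $e_{y,w}$ as an explicit multiple of $e_{y'',w''}$), but your induction is not actually set up, and the missing cases are exactly the hard ones: (i) the recursions do not always act inside $y''$ --- when $y''$ contains no subword $\alpha\beta$ the reduction must be performed inside $y'$, so the parallel step is a recursion for $(y',w')$, and the deletion term of recursion (4) produces a new split pair whose hypotheses ($|\cdot|_\alpha$ balance, Bruhat comparability, strictness) must be re-verified; (ii) the end recursions (2), (5), (6) act on the last letters of $y''$ and can destroy the hypothesis that $\pth(y'')$ lies strictly below $\pth(w'')$ (after recursion (2) the height gap at the right endpoint drops by $2$ and may become $0$), so the inductive statement as you formulated it no longer applies to the resulting pair; (iii) your base case $|y''|=1$ is not a ``trivial factorization'': it is the claim $e_{y'\alpha,w'\beta}=e_{y',w'}$, equivalently (after recursion (2)) $e_{y'\beta,w'\beta}=e_{y',w'}$ when $|y'|_\alpha=|w'|_\alpha$, which is not one of the recursions and genuinely changes the tree, since the appended $\beta$ matches the last unmatched $\alpha$ of $w'$ and converts the distinguished edge $E_1$ of $A(w')$ into a regular edge; proving it amounts to a special case of the capacity-zero labeling argument you are trying to bypass. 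So as written, part (2) is a strategy sketch rather than a proof: either carry out the induction with cases (i)--(iii) handled, or argue directly with labelings of $A(w/y)$ as the paper does, using the forced zeros above and below the capacity-zero edge $F$.
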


\begin{proof} We begin by proving \eqref{eq:equiv-eyw=0}, noting that the hypothesis that $\pth(y)$ lies strictly below $\pth(w)$ implies that $\cpc(T)>0$ for every terminal edge $T$ of $A(w)$ (where $\cpc(T)=\cpc_y(T)$). If $z_0\neq\emptyset$ then we get an admissible labeling $\ell$ of $A(w/y)$ by defining $\ell(G)=0$ for all $G$, which by Theorem~\ref{thm:count-trees} implies $e_{y,w}\geq 1$. We may therefore assume that $A(w)$ contains at least one distinguished edge and that $E_1$ is a terminal edge, and we write $c=|y|_{\alpha} - |w|_{\alpha}=\cpc(E_1)$. If $c$ is even then if we let $\ell(E_1)=c$ and $\ell(G)=0$ for all $G\neq E_1$, then $\ell$ is admissible and we conclude again that $e_{y,w}\geq 1$. We may thus further assume that $c$ is odd. If $r$ is even, then using the fact that all capacities are positive, we get that $\ell(E_1)=c$, $\ell(G)=1$ for $G\neq E_1$, defines an admissible labeling, so $e_{y,w}\geq 1$. We may thus further assume that $r$ is odd.

If $z_{2i}\neq 0$ for some $i$, we choose $i$ minimal with this property, and note that $i>0$ since $z_0\neq\emptyset$. If we let 
\[\ell(E_j)=\begin{cases}
c & \text{if }j=1,\\
1 & \text{if }1\leq j\leq 2i,\\
0 & \text{if }j>2i,\\
\end{cases}
\qquad\text{and for each regular edge }G\text{ let }
\ell(G)=\begin{cases}
1 & \text{if }G\in A(z_j),\ j\leq 2i,\\
0 & \text{if }G\in A(z_j),\ j>2i,\\
\end{cases}
\]
then $\ell$ is admissible, which implies $e_{y,w}\geq 1$. We may then further assume that $z_{2i}=\emptyset$ for all $i$, hence all the conditions on the right hand side of \eqref{eq:equiv-eyw=0} are satisfied. To conclude, we have to prove that $e_{y,w}=0$, or equivalently, that there is no admissible labeling of $A(w/y)$.

Suppose by contradiction that $\ell$ is an admissible labeling of $A(w/y)$. For each $i>0$, $z_{2i}=\emptyset$ implies that the only edges incident to $V_{2i}$ are the distinguished edges $E_{2i}$ and $E_{2i+1}$, hence $E_{2i}$ is the unique maximal edge which is strictly larger than $E_{2i}$, and condition (4) in Definition~\ref{def:admissible-label} implies that $\ell(E_{2i+1})=\ell(E_{2i})$. Combining this with condition (5) in the same definition, and with the fact that $\ell(E_1)=c$ is odd, we conclude that $\ell(E_j)$ is odd for all $1\leq j\leq r$. Moreover, since $r$ is odd we get also that 
\[0 = \ell(E_{r+1}) \equiv \ell(E_{r}) \ (\opmod 2),\]
which is a contradiction and concludes the proof of \eqref{eq:equiv-eyw=0}.

To prove part (2) of the theorem, notice that the hypothesis implies that $y''$ starts with $\alpha$, while $w''$ starts with $\beta$. If $|w'|_{\alpha}=0$ then $w'=y'=\beta\cdots\beta$, hence $A(w/y)=A(w''/y'')$ and \eqref{eq:eyw=0-iff-dble-prime=0} follows from Theorem~\ref{thm:count-trees}. We may therefore assume that $|w'|_{\alpha}>0$ and consider the last $\alpha$ in $w'$, which is necessarily followed in the word $w$ by $\beta$. We get a trough $\alpha\beta$ of $w$ and let $F$ denote the corresponding regular terminal edge. With the notation \eqref{eq:w-standard-form}, we have that $F\in A(z_j)$ for a unique $j$. We can write $z_j$ as the concatenation $z_j'z_j''$, with $z_j''$ a (possibly empty) subword of $w''$, $z_j'$ and $w''$ have no $\alpha$ in common, and $z_j',z_j''\in Z$ if $j<r$, while $z_j',z_j''\in\tilde{Z}$ if $j=r$. We have that $A(z_j)$ is obtained by glueing the trees $A(z_j')$ with $A(z_j'')$ at their root, and that $F\in A(z_j')$. Moreover, $A(w'')$ is the subtree of $A(w)$ consisting of the following:
\begin{itemize}
    \item distinguished edges $E_1,\cdots,E_{j}$ (incident to the distinguished nodes $V_0,\cdots,V_j$).
    \item the subtree $A(z_i)$ attached to $V_i$ for $i<j$, and the subtree $A(z_j'')$ attached to $V_j$.
\end{itemize}
Moreover, since $|y'|_{\alpha}=|w'|_{\alpha}$, the capacity of a terminal edge $T$ in $A(w'')$ relative to $y''$ is the same as that relative to $y$ when we view $T$ as an edge in $A(w)$. Moreover, since $F$ corresponds to the last trough of $w$ not in $w''$, $|y'|_{\alpha}=|w'|_{\alpha}$ and $y'\leq w'$, we must have $\cpc(F)=0$ (see also Example~\ref{ex:w-y-intersect} below, where $j=3$ and $A(z_j')$ consists precisely of the edges labeled $G$ and $F$). 

With the notation above, we can now prove \eqref{eq:eyw=0-iff-dble-prime=0}. Suppose first that $e_{y,w}\geq 1$ and let $\ell$ be an admissible labeling of $A(w/y)$. Since $\cpc(F)=0$, we get $\ell(F)=0$, hence for $i>j$ we have $E_i\leq F$, which yields $\ell(E_i)=0$. This implies that if we restrict $\ell$ to a labeling $\ell''$ of $A(w''/y'')$, then $\ell''$ still satisfies condition (5) in Definition~\ref{def:admissible-label}. The remaining conditions are easily seen to be preserved by the restriction, hence $\ell''$ is admissible and $e_{y'',w''}\geq 1$.

Conversely, suppose that $e_{y'',w''}\geq 1$, and consider an admissible labeling $\ell''$ of $A(w''/y'')$. We extend $\ell''$ to a labeling $\ell$ of $A(w/y)$ by setting $\ell(E_i)=0$ for $i>j$ and $\ell(G)=0$ for all $G\in A(z_i)$ with $i>j$, and for all $G\in A(z'_j)$. Note that $\ell$ satisfies conditions (1)--(3) in Definition~\ref{def:admissible-label}. It also satisfies condition (4), with a potential exception for the edge $E_{j+1}$ if $j+1=2i-1$ is odd: since the tree $A(z'_j)$ contains at least the edge $F>E_{j+1}$, our construction of the labeling has $\ell(E_{j+1})=0=\ell(F)$, hence (4) in fact holds. Similarly, it is clear that condition (5) in Definition~\ref{def:admissible-label} holds for $\ell$, with a potential exception when $2i-1=j$ is odd: in this case, the fact that $\ell''$ is admissible together with our convention in (5) implies that $\ell''(E_j)$ is even, hence $\ell(E_j)=\ell''(E_j)$ and $\ell(E_{j+1})=0$ have the same parity, so in fact (5) holds for $\ell$. It follows that $\ell$ is admissible, and therefore $e_{y,w}\geq 1$, concluding the proof.
\end{proof}

\begin{example}\label{ex:w-y-intersect}
 Let $w=w'w''$, where
 \[ w' = \beta\alpha\beta\beta\alpha\alpha\alpha\beta\qquad\text{ and }\qquad w''=\beta\alpha\alpha\beta\alpha\beta\beta\alpha\alpha\alpha\beta\alpha,\]
 and let $y=y'y''$, where
 \[ y' = \alpha\alpha\beta\alpha\beta\alpha\beta\beta\qquad\text{ and }\qquad y'' = \alpha\alpha\alpha\alpha\alpha\beta\alpha\beta\beta\alpha\alpha\alpha.\]
 We have $y\leq w$, $|y'|_{\alpha}=|w'|_{\alpha}=4$, and $\pth(y'')$ lies strictly below $\pth(w'')$, so the hypothesis of Theorem~\ref{thm:vanishing}(2) holds. The paths $y$ and $w$, together with the relevant capacities, are pictured as follows.
\[
\begin{array}{c}
\begin{tikzpicture}[scale=0.5]
\draw [thick] (0,0)--(1,1)--(2,0)--(3,1)--(4,2)--(5,1)--(6,0)--(7,-1)--(8,0)--(9,1)--(10,0)--(11,-1)--(12,0)--(13,-1)--(14,0)--(15,1)--(16,0)--(17,-1)--(18,-2)--(19,-1)--(20,-2);
\draw [thick] (0,0)--(1,-1)--(2,-2)--(3,-1)--(4,-2)--(5,-1)--(6,-2)--(7,-1)--(8,0)--(9,-1)--(10,-2)--(11,-3)--(12,-4)--(13,-5)--(14,-4)--(15,-5)--(16,-4)--(17,-3)--(18,-4)--(19,-5)--(20,-6);
\draw [dashed] (2,0)--(2,-2); 
\node at (2.4,-1) {$1$};
\draw [dashed] (11,-1)--(11,-3);
\node at (11.4,-2) {$1$};
\draw [dashed] (13,-1)--(13,-5);
\node at (13.4,-3) {$2$};
\draw [dashed] (18,-2)--(18,-4);
\node at (18.4,-3) {$1$};
\draw [dashed] (20,-2)--(20,-6);
\node at (20.4,-4) {$2$};
\draw [dotted] (8,4)--(8,-6);
\draw [<->,dotted] (0,-4)--(8,-4);
\node at (4,-3) {$\pth(y')$};
\draw [<->,dotted] (8,-7)--(20,-7);
\node at (14,-6) {$\pth(y'')$};
\draw [<->,dotted] (8,3)--(20,3);
\node at (14,2) {$\pth(w'')$};
\draw [<->,dotted] (0,4)--(8,4);
\node at (4,3) {$\pth(w')$};
\end{tikzpicture}
\end{array}
\]
The associated diagram $A(w/y)$ is given below, where $F,G,H$ and $E_4$ are the edges of $A(w)$ which do not belong to $A(w'')$.
\[
\begin{tikzpicture}[-,>=stealth',level/.style={sibling distance = 5cm/#1,level distance = 1.5cm}] 
\node [arn_n] {}
child{ node [arn_n] {1}
    edge from parent node[above left] {$H$}
}
child[grow=south east,emph]{ node [arn_n] {}
    child[grow=south west,norm]{ node [arn_n] {} 
        child[grow=south west,norm]{ node [arn_n] {0} 
              edge from parent node[above left] {$F$}
        }
    edge from parent node[above left] {$G$}
    }
    child[grow=south,norm]{ node [arn_n] {}
            child{ node [arn_n] {1} 
            }
            child{ node [arn_n] {2}
            }
		}
    child[grow=south east,emph]{ node [arn_n] {}
        child[grow=south east,emph]{ node [arn_n] {}
            child[grow=south,norm]{ node [arn_n] {1}
            }
            child[grow=south east,emph]{ node [arn_n] {2}
                  edge from parent node[above right] {$E_1$}
            }
          edge from parent node[above right] {$E_2$}
        }
    edge from parent node[above right] {$E_3$}
    }
edge from parent node[above right] {$E_4$}
}
; 
\end{tikzpicture}
\]
The reader can check by an argument similar to the one in Example~\ref{ex:124-labelings} that $e_{y,w}=32$ and $e_{y'',w''}=16$. To see at least that $e_{y,w}=2e_{y'',w''}$, it suffices to observe that every admissible labeling of $A(w''/y'')$ extends in two ways to an admissible labeling of $A(w/y)$, namely by letting $\ell(F)=\ell(G)=\ell(E_4)=0$ and $\ell(H)\in\{0,1\}$. 

We can make a slight modification to $y''$, which results in a dramatic change of the corresponding Euler obstruction. If we change the last letter of $y''$ from  $\alpha$ to $\beta$, then the capacity of the distinguished terminal edge becomes $\cpc(E_1)=1$, which is odd. Part (1) of Theorem~\ref{thm:vanishing} then implies that $e_{y'',w''}=0$, and by part (2) we get that $e_{y,w}=0$ as well.
\end{example}

\begin{remark}
 A careful analysis of the proof of Theorem~\ref{thm:vanishing} and Example~\ref{ex:w-y-intersect} shows that in fact under the hypothesis in case (2) of Theorem~\ref{thm:vanishing}, $e_{y,w}$ is always an integer multiple of $e_{y'',w''}$, as follows. If we let $a_i$ (resp. $a'_j$) denote the number of admissible labelings of the subdiagram corresponding to $A(z_i)$ (resp. $A(z'_j)$), then we have
 \[e_{y,w} = a'_j\cdot a_{j+1}\cdot a_{j+2}\cdots a_r\cdot e_{y'',w''}.\]
\end{remark}

\section{Symmetric matrices}\label{sec:symm-mat}

Consider the complex vector space $U$ of $n\times n$ symmetric matrices, with its rank stratification where $U_i$ denotes the stratum of rank $i$ matrices. We let
\[ e_{i,j} = \Eu_{\ol{U}_j}(x_i) \text{ for }x_i\in U_i,\]
where $\ol{U}_j = U_0\sqcup\cdots\sqcup U_j$ is the variety of matrices of rank at most $j$.

\begin{theorem}[\cite{zhang}*{Theorem~6.6}, \cite{lor-rai}*{Corollary~5.3}]\label{thm:symmetric-matrices}
 The local Euler obstructions for the rank stratification of the space of symmetric matrices are given for $i>j$ by $e_{i,j}=0$, and for $0\leq i\leq j\leq n$ by
\[ e_{i,j} = \begin{cases}
 0 & \text{ if }n-i\text{ is even and }n-j\text{ is odd}; \\
 \displaystyle{\lfloor\frac{n-i}{2}\rfloor \choose \lfloor\frac{j-i}{2}\rfloor} & \text{otherwise}.
 \end{cases}\]
\end{theorem}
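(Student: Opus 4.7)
The plan is to apply Theorem~\ref{thm:count-trees} after identifying the rank stratification on the space of $n\times n$ symmetric matrices as the restriction of the Schubert stratification of $LG(n,2n)$ to the big opposite cell $U$. Under the standard identification of $U$ with the space of symmetric matrices (writing a Lagrangian transverse to the reference Lagrangian as the graph of a symmetric form with respect to the given symplectic pairing), I expect the rank-$i$ stratum $U_i$ to correspond to $X_{y_i}\cap U$ and the variety of matrices of rank at most $j$ to correspond to $\ol{X}_{w_j}\cap U$, where
\[ y_i = \beta^i\alpha^{n-i}\qquad\text{and}\qquad w_j = \beta^j\alpha^{n-j}.\]
A direct dimension count confirms the assignment: $\dim X_{y_i} = i(2n-i+1)/2 = \dim\ol{U}_i$, and $y_i\leq w_j$ in the Bruhat order precisely when $i\leq j$. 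We thus obtain $e_{i,j} = e_{y_i, w_j}$, and the vanishing $e_{i,j}=0$ for $i > j$ is immediate from $y_i\not\leq w_j$; from here on I would assume $i\leq j$.

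Next I would describe the decorated tree $A(w_j/y_i)$. Writing $w_j$ in the standard form \eqref{eq:w-standard-form} gives $r = n-j$ with $z_r = \beta^j\in\tilde{Z}$ and $z_0=z_1=\cdots=z_{r-1}=\emptyset$, so $A(w_j)$ is a chain of $n-j$ distinguished edges $E_{n-j}>\cdots>E_1$ (with $E_1$ furthest from the root). The only terminal edge is $E_1$, and \eqref{eq:def-cap-distinguished} yields $\cpc(E_1) = |y_i|_\alpha - |w_j|_\alpha = j-i$. Setting $a_k := \ell(E_k)$, $c := j-i$, $r := n-j$, I would check that Definition~\ref{def:admissible-label} reduces to: $a_1=c$; $a_r\leq\cdots\leq a_1$; $a_{2k-1}=a_{2k-2}$ for every $k\geq 2$ with $2k-1\leq r$ (since in a chain the minimum appearing in condition~(4) is attained at the immediate predecessor); and $c\equiv a_2\equiv a_4\equiv\cdots\pmod 2$, with $a_r$ required to be even whenever $r$ is odd.

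Substituting $b_k := a_{2k}$, admissible labelings biject with weakly decreasing sequences $c\geq b_1\geq\cdots\geq b_s\geq 0$, where $s=\lfloor r/2\rfloor$, whose entries all share the parity of $c$; moreover, when $r$ is odd, $c$ itself must be even for any such sequence to exist. After the substitution $b_k = 2b_k'$ (when $c$ is even) or $b_k = 2b_k'+1$ (when $c$ is odd), this becomes a standard stars-and-bars count of weakly decreasing non-negative integer sequences of length $s$ bounded by $\lfloor c/2\rfloor$, equal to $\binom{\lfloor c/2\rfloor + s}{s}$. A short case analysis on the parities of $r = n-j$ and $c = j-i$, combined with the identity $\binom{a+b}{a}=\binom{a+b}{b}$, then shows this count equals $\binom{\lfloor(n-i)/2\rfloor}{\lfloor(j-i)/2\rfloor}$, while the remaining case ``$r$ odd and $c$ odd'' (equivalently, ``$n-i$ even and $n-j$ odd'') forces zero labelings, reproducing the vanishing asserted in the formula.

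The main technical obstacle is the geometric identification in the first step: establishing the set-theoretic equality $\ol{U}_j = \ol{X}_{w_j}\cap U$ for the specific word $w_j = \beta^j\alpha^{n-j}$. Once this is in place the description of $A(w_j/y_i)$ is immediate, and counting its admissible labelings is a routine parity argument.
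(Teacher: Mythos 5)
Your proposal is correct and follows essentially the same route as the paper: the big opposite cell identification with $\ol{U}_j=\ol{X}_{w}\cap U$ for $w=\beta^j\alpha^{n-j}$ (which the paper settles by citing Lakshmibai--Raghavan together with the observation that $X_w\cap U\subseteq U_j$ if and only if $|w|_\beta=j$, rather than by a dimension count), followed by the same computation of $e_{y,w}$ for $y=\beta^i\alpha^{n-i}$ as a parity-constrained count of admissible labelings of the chain of $n-j$ distinguished edges with $\cpc(E_1)=j-i$. The only imprecision is your claim that $U_i$ ``corresponds to'' $X_{y_i}\cap U$ --- the Schubert cells strictly refine the rank strata, so $U_i$ is a union of the cells with $|w|_\beta=i$ --- but since all the argument needs is $X_{y_i}\cap U\subseteq U_i$ together with $\ol{U}_j=\ol{X}_{w_j}\cap U$, this does not affect correctness.
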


\begin{proof} Following \cite{lak-rag}*{Section~6.2.5}, we can realize $U$ as an open subset of $LG(n,2n)$ (the dense orbit relative to the action of the opposite Borel), and moreover, if we restrict the Schubert stratification to $U$ then we get a refinement of the rank stratification. More precisely, we have that
\[ X_w \cap U \subseteq U_j \Longleftrightarrow |w|_{\beta} = j.\]
In particular, the largest word $w$ with $X_w\cap U\subseteq U_j$ is given by
\begin{equation}\label{eq:w-rank-j}
w = \beta\cdots\beta\alpha\cdots\alpha = \beta^j \alpha^{n-j},
\end{equation}
and therefore we have that
\[\ol{X_w\cap U} = \ol{U}_j.\]
If we take $y=\beta^i\alpha^{n-i}$, $i\leq j$, and let $w$ as in \eqref{eq:w-rank-j}, then we then obtain
\[ e_{i,j} = e_{y,w},\]
which we then compute using Theorem~\ref{thm:count-trees}. We note that $A(w/y)$ has a simple form, namely it consists only of $n-j$ distinguished edges, and the capacity of $E_1$ is given by $\cpc(E_1)=j-i$:
\[
\begin{tikzpicture}[-,>=stealth',level/.style={sibling distance = 5cm/#1,level distance = 1.5cm}] 
\node [arn_n] {}
child[grow=south east,emph]{ node [arn_n] {}
    child[grow=south east,dott]{ node [arn_n] {}
        child[grow=south east,emph]{ node [arn_n] {}
            child[grow=south east,emph]{ node [arn_nl] {$j-i$}
                  edge from parent node[above right] {$E_1$}
            }
          edge from parent node[above right] {$E_2$}
        }
    }
edge from parent node[above right] {$E_{n-j}$}
}
; 
\end{tikzpicture}
\]
If we write $\ell_t = \ell(E_t)$ then an admissible labeling corresponds to a sequence
\[ j-i = \ell_1 \geq \ell_2 = \ell_3 \geq \ell_4=\ell_5 \geq \cdots \]
where all $\ell_t$ have the same parity, and $\ell_{n-j}$ is even when $n-j$ is odd. This is impossible if both $(j-i)$ and $(n-j)$ are odd, or equivalently, if $(n-i)$ is even and $(n-j)$ is odd, proving that $e_{i,j}=e_{y,w}=0$ in that case. Otherwise, if we let $a_t = \lfloor\ell_{2t+1}/2\rfloor$, then the choice of the labeling $\ell$ is equivalent to the choice of an integer sequence satisfying
\[ \left\lfloor \frac{j-i}{2}\right\rfloor \geq a_1 \geq \cdots \geq a_{\lfloor(n-j)/2\rfloor} \geq 0.\]
It follows from Theorem~\ref{thm:count-trees} that
\[ e_{i,j} = e_{y,w} = \displaystyle{\lfloor\frac{j-i}{2}\rfloor + \lfloor\frac{n-j}{2}\rfloor \choose \lfloor\frac{j-i}{2}\rfloor} = \displaystyle{\lfloor\frac{n-i}{2}\rfloor \choose \lfloor\frac{j-i}{2}\rfloor},\]
where the last equality uses the fact that not both $(j-i)$ and $(n-j)$ are odd.
\end{proof}

\section*{Acknowledgements}
We are grateful to Leonardo Mihalcea for helpful suggestions and clarifications regarding our manuscript. Experiments with the computer algebra software Macaulay2 \cite{M2} have provided numerous valuable insights. Raicu acknowledges the support of the National Science Foundation Grant No.~1901886.

	\begin{bibdiv}
		\begin{biblist}

\bib{AMSS}{article}{
   author={Aluffi, Paolo},
   author={Mihalcea, Leonardo C.},
   author={Sch\"urmann, J\"org},
   author={Su, Changjian},
   title={Shadows of characteristic cycles, Verma modules, and positivity of Chern--Schwartz--MacPherson classes of Schubert cells},
   journal = {arXiv},
   number = {1709.08697},
   date={2017},
}

\bib{boe}{article}{
   author={Boe, Brian D.},
   title={Kazhdan-Lusztig polynomials for Hermitian symmetric spaces},
   journal={Trans. Amer. Math. Soc.},
   volume={309},
   date={1988},
   number={1},
   pages={279--294},
}

\bib{boe-fu}{article}{
   author={Boe, Brian D.},
   author={Fu, Joseph H. G.},
   title={Characteristic cycles in Hermitian symmetric spaces},
   journal={Canad. J. Math.},
   volume={49},
   date={1997},
   number={3},
   pages={417--467},
}

\bib{BFL}{article}{
   author={Bressler, P.},
   author={Finkelberg, M.},
   author={Lunts, V.},
   title={Vanishing cycles on Grassmannians},
   journal={Duke Math. J.},
   volume={61},
   date={1990},
   number={3},
   pages={763--777},
}

\bib{M2}{article}{
          author = {Grayson, Daniel R.},
          author = {Stillman, Michael E.},
          title = {Macaulay 2, a software system for research
                   in algebraic geometry},
          journal = {Available at \url{http://www.math.uiuc.edu/Macaulay2/}}
        }

\bib{lak-rag}{book}{
   author={Lakshmibai, Venkatramani},
   author={Raghavan, Komaranapuram N.},
   title={Standard monomial theory},
   series={Encyclopaedia of Mathematical Sciences},
   volume={137},
   note={Invariant theoretic approach;
   Invariant Theory and Algebraic Transformation Groups, 8},
   publisher={Springer-Verlag, Berlin},
   date={2008},
   pages={xiv+265},
}

\bib{las-sch}{article}{
   author={Lascoux, Alain},
   author={Sch\"{u}tzenberger, Marcel-Paul},
   title={Polyn\^{o}mes de Kazhdan \& Lusztig pour les grassmanniennes},
   language={French},
   conference={
      title={Young tableaux and Schur functors in algebra and geometry
      (Toru\'{n}, 1980)},
   },
   book={
      series={Ast\'{e}risque},
      volume={87},
      publisher={Soc. Math. France, Paris},
   },
   date={1981},
   pages={249--266},
}

\bib{lor-rai}{article}{
   author={L\H{o}rincz, Andr\'{a}s C.},
   author={Raicu, Claudiu},
   title={Local Euler obstructions for determinantal varieties},
   journal = {arXiv},
   number = {2105.00271},
   date={2021}
}

\bib{macpherson}{article}{
   author={MacPherson, R. D.},
   title={Chern classes for singular algebraic varieties},
   journal={Ann. of Math. (2)},
   volume={100},
   date={1974},
   pages={423--432},
}

\bib{mihalcea-singh}{article}{
   author={Mihalcea, Leonardo C.},
   author={Singh, Rahul},
   title={Mather classes and conormal spaces of Schubert varieties in cominuscule spaces},
   journal = {arXiv},
   number = {2006.04842},
   date={2020}
}

\bib{zhang}{article}{
   author={Zhang, Xiping},
   title={Geometric Invariants of Recursive Group Orbit Stratification},
   journal = {arXiv},
   number = {2009.09362},
   date={2020}
}
		\end{biblist}
	\end{bibdiv}

\end{document}